\definecolor{dark-red}{rgb}{0.5,0.15,0.15}
\definecolor{dark-blue}{rgb}{0.15,0.15,0.6}
\definecolor{dark-green}{rgb}{0.15,0.6,0.15}
\numberwithin{equation}{section}
\newtheorem{Thm}[subsection]{Theorem}
\newtheorem*{Thm*}{Theorem}
\newtheorem{Prop}[subsection]{Proposition}
\newtheorem{Lem}[subsection]{Lemma}
\newtheorem{Cor}[subsection]{Corollary}
\newtheorem{thmx}{Theorem}
\theoremstyle{remark}
\newtheorem{Def}[subsection]{Definition}
\newtheorem{Conv}[subsection]{Convention}
\newtheorem{Rem}[subsection]{Remark}
\let\realequation\equation
\def\equation{\setcounter{equation}{\arabic{subsection}}%
   \refstepcounter{subsection}%
   \realequation}
\tikzstyle{category} = [rectangle, rounded corners, minimum width=3cm, minimum height=1cm, text centered, text width=2.5cm, draw=black]
\tikzstyle{area} = [rectangle, minimum width=4cm, minimum height=1cm, text centered, text width=3.8cm, draw=black]
\tikzstyle{arrow} = [thick,->,>=stealth]
\tikzstyle{arrow2} = [thick,->,>=stealth,dotted]
\newcommand{\nc}{\newcommand}
\nc{\dmo}{\DeclareMathOperator}
\nc{\overbar}[1]{\mkern 1.5mu\overline{\mkern-1.5mu#1\mkern-1.5mu}\mkern 1.5mu}
\nc{\kappaaux}{g}
\nc{\kappam}{{\kappaaux({\frak m})}}
\nc{\kappaP}{{\kappaaux(\cat P)}}
\nc{\kappaQ}{{\kappaaux(\cat Q)}}
\nc{\kappaSP}{{\kappaaux_{\cat S}(\cat P)}}
\nc{\kappaTP}{{\kappaaux_{\cat T}(\cat P)}}
\nc{\kappaSQ}{{\kappaaux_{\cat S}(\cat Q)}}
\nc{\kappaTQ}{{\kappaaux_{\cat T}(\cat Q)}}
\nc{\kappaphiB}{{\kappaaux(\varphi(\cat B))}}
\nc{\kappaphiQ}{{\kappaaux(\varphi(\cat Q))}}
\dmo{\Sub}{Sub}
\nc{\SpEn}{\cat S_{E(n)}}
\nc{\SpEnf}{\cat S_n}
\nc{\Loco}[1]{\Loc_{\otimes}\hspace{-0.3ex}\langle #1 \rangle}
\nc{\bbullet}{{\scriptscriptstyle\hspace{-1pt}\bullet}}
\nc{\bullett}{{\scriptscriptstyle\bullet}\hspace{-1pt}}
\nc{\LF}{L\hspace{-0.2ex}F}
\nc{\SpG}{\Sp_G}
\nc{\SpGn}{\Sp_{G,n}}
\nc{\EG}{\bbE_G}
\nc{\EH}{\bbE_H}
\nc{\DEG}{\Der(\EG)}
\nc{\DEH}{\Der(\EH)}
\nc{\DE}{\Der(\bbE)}
\nc{\Prst}{{\cat P}\mathrm{r^{st}}}
\nc{\Mack}[2]{\mathrm{Mack}_{#1}(#2)}
\nc{\SC}{S\cat C}
\dmo{\fin}{{fin}}
\dmo{\DM}{DM}
\dmo{\fp}{fp}
\nc{\DMQ}{\DM_Q}
\dmo{\DerKal}{DMack}
\dmo{\Der}{D}
\dmo{\DMot}{DMot}
\dmo{\rmH}{H}
\dmo{\piu}{\underline{\pi}}
\dmo{\Sphere}{\mathbb{S}}
\nc{\HA}{{\rmH \hspace{-0.2em}\bbA}}
\nc{\HZ}{{\rmH \hspace{-0.2em}\bbZ}}
\nc{\HZbar}{{\rmH \hspace{-0.2em}\underline{\bbZ}}}
\nc{\Fp}{{\bbF_{\hspace{-0.1em}p}}}
\nc{\HFp}{{\rmH \hspace{-0.15em}\bbF_{\hspace{-0.1em}p}}}
\nc{\DHZG}{\Der(\HZ_G)}
\nc{\DHZH}{\Der(\HZ_H)}
\nc{\DHZK}{\Der(\HZ_K)}
\nc{\DHZGN}{\Der(\HZ_{G/N})}
\nc{\DHZGG}{\Der(\HZ_{G/G})}
\nc{\DHZCp}{\Der(\HZ_{C_p})}
\nc{\DHZGprime}{\Der(\HZ_{G'})}
\nc{\DHZ}{\Der(\HZ)}
\nc{\frakp}{\mathfrak{p}}
\nc{\frakq}{\mathfrak{q}}
\nc{\Z}{\mathbb{Z}}
\nc{\F}{\mathbb{F}}
\nc{\SSG}{\text{sSet}_*^G}
\nc{\sSet}{\text{sSet}}
\dmo{\csupp}{supp_{coh}}
\dmo{\Con}{Conj}
\dmo{\Id}{Id}
\dmo{\Loc}{Loc}
\dmo{\rmK}{\textrm{\rm K}}
\dmo{\Spc}{Spc}
\dmo{\thick}{thick}
\dmo{\Thick}{Thick}
\nc{\Thickt}[1]{\Thick_\otimes\langle #1 \rangle}
\dmo{\cone}{cone}
\dmo{\End}{End}
\dmo{\Mor}{Mor}
\dmo{\Hom}{Hom}
\dmo{\id}{id}
\dmo{\incl}{incl}
\dmo{\Img}{Im}
\dmo{\im}{im}
\dmo{\Ker}{Ker}
\dmo{\ind}{ind}
\dmo{\CoInd}{coind}
\dmo{\res}{res}
\dmo{\infl}{infl}
\dmo{\triv}{triv}
\dmo{\Tel}{Tel} 
\dmo{\grMod}{grMod}%
\dmo{\Mod}{Mod}%
\dmo{\opname}{op}
\dmo{\SH}{SH}
\dmo{\smallb}{b}
\dmo{\Spec}{Spec}
\dmo{\supp}{supp}
\dmo{\Supp}{Supp}
\dmo{\cosupp}{cosupp}
\dmo{\Cosupp}{Cosupp}
\nc{\SHc}{{\SH^c}}
\nc{\SHp}{{\SH_{(p)}}}
\nc{\SHcp}{{\SH^c_{(p)}}}
\nc{\SHG}{\SH(G)}
\nc{\SHGp}{\SH(G)_{(p)}}
\nc{\SHGc}{\SHG^c}
\nc{\SHGcp}{\SHG^c_{(p)}}
\nc{\quadtext}[1]{\quad\textrm{#1}\quad}
\nc{\qquadtext}[1]{\qquad\textrm{#1}\qquad}
\nc{\adj}{\dashv}
\nc{\adjto}{\rightleftarrows}
\nc{\bbL}{\mathbb{L}}
\nc{\bbA}{\mathbb{A}}
\nc{\bbE}{\mathbb{E}}
\nc{\bbN}{\mathbb{N}}
\nc{\bbQ}{\mathbb{Q}}
\nc{\bbZ}{\mathbb{Z}}
\nc{\bbF}{\mathbb{F}}
\nc{\cat}[1]{\mathscr{#1}}
\nc{\ie}{{\sl i.e.}, }
\nc{\into}{\mathop{\rightarrowtail}}
\nc{\inv}{^{-1}}
\nc{\isoto}{\mathop{\overset{\sim}\to}}
\nc{\isotoo}{\mathop{\overset{\sim}\too}}
\nc{\onto}{\mathop{\twoheadrightarrow}}
\nc{\too}{\mathop{\longrightarrow}\limits}
\nc{\mapstoo}{\longmapsto}
\nc{\adh}[1]{\overline{#1}}
\nc{\adhpt}[1]{\adh{\{#1\}}}
\nc{\aka}{{a.\,k.\,a.}\ }
\nc{\calF}{\mathcal{F}}
\nc{\eg}{{\sl e.\,g.}}
\nc{\Homcat}[1]{\Hom_{\cat #1}}
\nc{\hook}{\hookrightarrow}
\nc{\ideal}[1]{\langle #1\rangle}
\nc{\ihom}{{\underline{\hom}}}
\nc{\iHom}{\mathcal{H}\mathrm{om}}
\nc{\Mid}{\,\big|\,}
\nc{\MMod}{\,\text{-}\Mod}%
\nc{\op}{^{\opname}}
\nc{\oto}[1]{\overset{#1}\to}
\nc{\otoo}[1]{\overset{#1}{\,\too\,}}
\nc{\sminus}{\!\smallsetminus\!}
\nc{\poplus}[1]{^{\oplus #1}}%
\nc{\potimes}[1]{^{\otimes #1}}
\nc{\sbull}{{\scriptscriptstyle\bullet}}
\nc{\SET}[2]{\big\{\,#1\Mid#2\,\big\}}
\nc{\SpcK}{\Spc(\cat K)}
\nc{\then}{\Rightarrow}
\nc{\unit}{\mathbb{1}}
\nc{\xra}{\xrightarrow}
\nc{\phigeom}[1]{\widetilde{\Phi}^{#1}}
\nc{\phigeomb}[1]{\Phi^{#1}}
\dmo{\Oname}{O}
\dmo{\proper}{proper}
\dmo{\lenormal}{\unlhd}
\dmo{\lnormal}{\lhd}
\nc{\normal}{\trianglelefteq}
\nc{\Op}{\Oname^p}
\nc{\Oq}{\Oname^q}
\dmo{\Sp}{Sp}
\dmo{\Ho}{Ho}
\dmo{\Fin}{Fin}
\dmo{\add}{add}
\dmo{\Fun}{Fun}
\dmo{\Ext}{Ext}
\dmo{\CAlg}{CAlg}
\dmo{\CMon}{CMon}
\dmo{\CC}{\cat C}
\dmo{\DD}{\cat D}
\dmo{\OO}{\mathcal{O}}
\dmo{\Map}{Map}
\dmo{\Span}{Span}
\dmo{\N}{N}
\dmo{\Cat}{Cat}
\dmo{\colim}{colim}
\dmo{\hocolim}{hocolim}
\dmo{\Ch}{Ch}
\dmo{\A}{\mathbb{A}^{eff}}
\nc{\AGeff}{\mathbb{A}_G^{\mathrm{eff}}}
\nc{\BGeff}{\mathcal{B}_G^{\mathrm{eff}}}
\nc{\BG}{{\mathcal{B}_G}}
\nc{\NBGeff}{{\N}{\BGeff}}
\dmo{\Ab}{Ab}
\dmo{\Set}{Set}
\dmo{\ev}{ev}
\dmo{\Spcl}{Spcl}
\nc{\Funadd}{\Fun_{\add}}
\dmo{\proj}{proj}
\dmo{\cof}{cof}
\dmo{\Coideal}{Coideal}
\dmo{\gen}{gen}
\dmo{\StMod}{StMod}
\dmo{\projmod}{Lat}
\dmo{\lat}{lat}
\dmo{\Lat}{Lat}
\dmo{\rep}{rep}
\dmo{\Rep}{Rep}
\dmo{\Perf}{Perf}
\dmo{\stmod}{stmod}
\dmo{\Ind}{Ind}
\nc{\borel}[1]{\underline{#1}}
\dmo{\coind}{coind}
\dmo{\rank}{rank}
\nc{\tH}{\hat{H}}
\dmo{\Nm}{Nm}
\dmo{\Proj}{Proj}
\dmo{\Inj}{Inj}
\dmo{\dual}{dual}
\dmo{\fg}{fg}
\nc{\cdvr}[2]{{#1}_{#2}^{\wedge}}
\nc{\cA}{\mathcal{A}}
\dmo{\orbit}{Or}
\nc{\mT}{\kern-0.5em\mod\kern-0.1em\text{-}\cat{T}^c}
\nc{\MT}{\Mod\kern-0.1em\text{-}\cat{T}}
\newcounter{enum-resume-hack}
\dmo{\bP}{\mathbb{P}}
\dmo{\bT}{\mathbb{T}}
\nc{\LOCO}{\mathcal{L}\mathrm{oc}_{\otimes}}
\dmo{\Ob}{Ob}
\begin{document}


\title[Integral stratification]{Stratifying integral representations \\ via equivariant homotopy theory}

\author{Tobias Barthel}

\date{\today}

\makeatletter
\patchcmd{\@setaddresses}{\indent}{\noindent}{}{}
\patchcmd{\@setaddresses}{\indent}{\noindent}{}{}
\patchcmd{\@setaddresses}{\indent}{\noindent}{}{}
\patchcmd{\@setaddresses}{\indent}{\noindent}{}{}
\makeatother

\address{Tobias Barthel, Max Planck Institute for Mathematics, Vivatsgasse 7, 53111 Bonn, Germany}
\email{tbarthel@mpim-bonn.mpg.de}
\urladdr{https://sites.google.com/view/tobiasbarthel/home}

\begin{abstract}
We prove that the derived category of $R$-linear representations of a finite group $G$ is stratified for any regular commutative ring $R$. As an application, we obtain a classification of localizing tensor ideals of ordinary $R$-linear $G$-representations whose underlying $R$-module is projective.
\vspace{-3em}
\end{abstract}

\subjclass[2020]{18F99, 18G65, 18G80, 20C10, 55P91, 55U35}

\maketitle

\tableofcontents
\vspace{-3em}

\section{Introduction}

Let $G$ be a finite group and $R$ a commutative ring. The subject of this paper is the classification of $R$-linear $G$-representations. Since such a classification up to isomorphism is inaccessible in general, we instead turn to a coarser classification problem. This takes place in a derived context, more specifically the derived category of $R$-linear $G$-representations $\Rep(G,R)$ from \cite{barthel_integral1}. While constructed homotopically, it also encompasses the algebraic category of representations.

Originating in the ideas of Ravenel, Hopkins, Neeman, and Thomason, tensor-triangular geometry as pioneered by Balmer provides a context in which the local and global structure of tensor-triangulated categories is studied through the classification of their tensor ideals. One approach to such a classification for $\Rep(G,R)$ is based on cohomological support, which assigns to any representation a subset of the homogeneous Zariski spectrum of $H^*(G;R)$. Moreover, it descends to a map 
\[
\begin{Bmatrix}
\text{Localizing tensor ideals of } \Rep(G,R)
\end{Bmatrix} 
\xymatrix@C=2pc{ \ar[r]^-{} &}
\begin{Bmatrix}
\text{Subsets of } \Spec^h(H^*(G;R))
\end{Bmatrix}.
\]
We say that $\Rep(G,R)$ is \emph{stratified} if this map is a bijection. In other words, stratification amounts to a generic classification of representations in terms of cohomological support. The main theorem of this paper is:  

\begin{thmx}[\cref{thm:repstratification}]\label{thma}
The category $\Rep(G,R)$ is stratified for any finite group $G$ and any regular commutative ring $R$.
\end{thmx}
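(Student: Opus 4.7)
The plan is to apply the general stratification machinery of Benson--Iyengar--Krause and Barthel--Heard--Sanders, which reduces stratification of $\Rep(G,R)$ to two ingredients: a \emph{local-to-global principle} for cohomological support, and \emph{minimality} of the local category $\Gamma_\mathfrak{p}\Rep(G,R)$ at every prime $\mathfrak{p}\in\Spec^h H^*(G;R)$. Since $R$ is regular and $G$ is finite, $H^*(G;R)$ is Noetherian (by Venkov/Evens together with a base-change argument), so the local-to-global principle should follow from the general BHS formalism applied to the graded endomorphism ring of the unit.

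The substantive task is therefore minimality at each $\mathfrak{p}$. My plan is to reduce this to the known field case by descent along $R \to k(\mathfrak{q})$. Any prime $\mathfrak{p} \in \Spec^h H^*(G;R)$ lies above a unique $\mathfrak{q} \in \Spec(R)$ via the unit map $R \to H^*(G;R)$; because $R$ is regular, the localization $R_\mathfrak{q}$ is a regular local ring, so the residue field $k(\mathfrak{q})$ is obtained from $R_\mathfrak{q}$ by killing a finite regular sequence. In the equivariant homotopical model of $\Rep(G,R)$ as modules over the equivariant Eilenberg--MacLane construction on $R$, the map $R \to k(\mathfrak{q})$ is realised as a finitely-presented map of genuine equivariant commutative ring spectra, and base change along it provides a symmetric monoidal functor $\Rep(G,R) \to \Rep(G,k(\mathfrak{q}))$. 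Combining this base change with the stratification of $\Rep(G,k)$ over a field (Benson--Iyengar--Krause and subsequent derived extensions) should then yield minimality at $\mathfrak{p}$.

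The main obstacle will be controlling this descent in the equivariant world. One must verify that no ``phantom'' localizing tensor ideals appear from the extra directions in $\Spec(R)$ beyond the fibre over $\mathfrak{q}$, i.e.\ that the Koszul d\'evissage of $k(\mathfrak{q})$ over $R_\mathfrak{q}$ interacts coherently with the local cohomology functors $\Gamma_\mathfrak{p}$ on $\Rep(G,R)$, and that base change is locally at $\mathfrak{p}$ surjective on the lattice of localizing tensor ideals. It is here that the genuine equivariant structure---restriction to subgroups, geometric fixed-point functors $\phigeom{H}$ for $H\leq G$, and the fibre sequences connecting them---becomes essential: these functors allow one to track support through the change of base ring and to verify faithfulness of the descent at the level of localizing ideals, making the passage from the field case to the regular integral case systematic.
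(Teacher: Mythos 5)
Your proposal takes a genuinely different route from the paper. The paper does \emph{not} reduce to the field case of Benson--Iyengar--Krause; instead it reproves that case from scratch by passing to modules over cochains of a torus (via a formality argument and the Benson--Iyengar--Krause stratification theorem for formal commutative dg-algebras), ascending along the faithful $\bT$-Galois extension $A^{h\bT} \to A^{hE}$ to handle elementary abelian $E$, then lifting from $R/p$ back up to $R$ by a modular lifting argument, and finally descending from elementary abelian subgroups to $G$ by finite \'etale descent along the separable algebra $\prod_E \underline{R}_G \otimes G/E_+$. Your fiberwise strategy---probing $\Rep(G,R)$ via all residue fields $\kappa(\frak q)$ of $R$ and reducing to the known field case---is essentially the alternative approach of Benson, Iyengar, Krause, and Pevtsova that the paper itself discusses in its methodological remarks. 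Both routes are valid; the paper's route buys a self-contained proof (reproving rather than assuming the field case) and applies to coefficients in commutative ring spectra where residue fields need not exist, whereas the fiberwise route is shorter if one is willing to take the field-coefficient result as a black box.

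That said, your write-up leaves the decisive step as a gap. You observe that one must ``verify that no phantom localizing tensor ideals appear from the extra directions in $\Spec(R)$'' and that ``base change is locally at $\frak p$ surjective on the lattice of localizing tensor ideals,'' but this is precisely the content that needs an argument, not a reformulation of the goal. The available descent criteria (compare \cref{prop:ttdescent}) require either a finite localization, a compact algebra object whose induced map on Balmer spectra is injective, or a faithful separable algebra of finite degree. Since $\kappa(\frak q)$ over $R_{\frak q}$ is obtained by killing a regular sequence of positive length, it is a compact algebra but is \emph{not} separable, so one must actually establish injectivity of $\Spc(\rep(G,\kappa(\frak q))) \to \Spc(\rep(G,R_{\frak q}))$ and then run a Koszul d\'evissage to pass minimality back through the tower. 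Finally, your appeal to geometric fixed-point functors is somewhat misdirected here: neither the paper's argument nor the descent criteria for Borel-equivariant module categories use $\Phi^H$; the genuinely equivariant input is restriction to (elementary abelian) subgroups combined with separability of the induced algebras, and it is that structure, not geometric fixed points, which one must bring to bear on the base-change problem.
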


This result generalizes the main theorem of Benson, Iyengar, and Krause \cite{BensonIyengarKrause11a} for $R$ a field, as well as \cite{barthel_integral1} for $R$ a ring of integers in a number field, without depending on these earlier results. The proof uses ideas from our previous paper \cite{barthel_integral1}, but amplifies them through the use of tools from equivariant homotopy theory. It can be seen as a blueprint of similar classification results in \emph{spectral representation theory}: that is, for derived categories of representations with coefficients in ring spectra.

\subsection{Context and consequences of the main theorem}

This paper takes place in an appropriate framework of derived (or spectral) representation theory. The key category $\Rep(G,R)$ and its accompanying stable module category $\StMod(G,R)$ were introduced in \cite{barthel_integral1} as a suitable generalization of their more familiar algebraic cousins $K(\Inj(kG))$ and $\StMod(G,k)$ from fields $k$ to general ring (spectra) coefficients. A previous candidate was constructed by Benson, Iyengar, and Krause \cite{BensonIyengarKrause13}, but was there found to be unsuitable for a cohomological stratification. 

While homotopical in nature, and therefore perhaps less familiar to an algebraic audience, we offer a few remarks on why $\Rep(G,R)$ exhibits a number of useful features in comparison to its more algebraic manifestations (reviewed in \cref{rem:incarnations}):
    \begin{itemize}
        \item The construction naturally extends to more general situations, such as to compact Lie groups instead of finite groups $G$ or with coefficients in ring spectra $R$ as opposed to rings. 
        \item From an abstract point of view, it serves as a gateway between representation theory and stable homotopy theory, and thus allows us to bring homotopical techniques (such as spectral Galois theory) to bear on algebraic problems. 
        \item It makes precise the intuitive idea that derived representations of a group $G$ in a category $\cat C$ should be given by `continuous' functors from the classifying space $BG$ to $\cat C$. Informally speaking, such a functor sends the point of $BG$ to an object of $\cat C$, the elements of $G$ to automorphisms of $G$, and all of this coherently up to higher homotopies. 
    \end{itemize}
Moreover, through its connection to equivariant homotopy theory, $\Rep(G,R)$ becomes particularly amenable to the general theory of stratification developed in \cite{bhs1}, following earlier work of Hovey--Palmieri--Strickland \cite{HoveyPalmieriStrickland97}, Benson--Iyengar--Krause \cite{BensonIyengarKrause11b}, and Stevenson~\cite{Stevenson13}. This general theory also allows us to deduce several important structural features about $\Rep(G,R)$ from the main theorem. In particular, we obtain the next result:

\begin{thmx}[\cref{thm:repconsequences}]
Let $G$ be a finite group and let $R$ be a regular commutative ring. The telescope conjecture holds in $\Rep(G,R)$ and cohomological support satisfies the tensor product formula.
\end{thmx}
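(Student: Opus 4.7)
The plan is to derive this theorem as a formal consequence of \cref{thma} using the general stratification machinery developed in \cite{bhs1}. In that framework, once a rigidly-compactly generated tensor-triangulated category $\cat T$ is stratified by a support theory valued in a Noetherian topological space, both the telescope conjecture for $\cat T$ and the tensor product formula for the support are automatic consequences, packaged as a black-box implication. Thus the entire substance of the argument is already contained in \cref{thma}, and what remains is to verify that the hypotheses of the BHS implication apply to $\Rep(G,R)$.

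To this end, I would first recall from \cite{barthel_integral1} that $\Rep(G,R)$ is a rigidly-compactly generated tt-category. Next I would identify the cohomological support appearing in the statement as the pullback of Balmer's universal support along the canonical comparison map $\rho \colon \Spc(\Rep(G,R)^c) \to \Spec^h(H^*(G;R))$, so that classification via cohomological support coincides with BHS-style stratification over $\Spec^h(H^*(G;R))$. The key additional input is Noetherianity of the classifying space: because $R$ is regular (hence Noetherian) and the mod-$p$ cohomology rings $H^*(G;\F_p)$ are finitely generated $\F_p$-algebras by Evens--Venkov, a standard spectral-sequence argument promotes this to Noetherianity of $H^*(G;R)$, whence $\Spec^h(H^*(G;R))$ is a Noetherian topological space.

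With these inputs in hand, \cref{thma} together with the BHS formalism immediately yields both desired statements: the telescope conjecture for $\Rep(G,R)$, in the form that every smashing tensor ideal is generated by compact objects, and the tensor product formula $\csupp(X \otimes Y) = \csupp(X) \cap \csupp(Y)$ for arbitrary objects $X,Y \in \Rep(G,R)$. The only mildly delicate points are bookkeeping ones, namely matching the cohomological support used in \cref{thma} with the Balmer-theoretic support natural to \cite{bhs1} and confirming that the regularity assumption on $R$ is precisely what powers the Noetherianity step; neither of these presents a genuine obstacle once the general framework has been set up.
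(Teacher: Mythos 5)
Your proposal is correct and follows the same route as the paper: the theorem is a formal consequence of \cref{thm:repstratification} via the general stratification machinery of \cite{bhs1} (specifically their Theorems 8.1 and 9.11), with Noetherianity of $H^*(G;R)$ (implicitly in force via Evens--Venkov and the paper's standing Convention) guaranteeing the hypotheses of that framework are met. The extra bookkeeping you flag about matching cohomological support with the Balmer--Favi support is handled in the paper by \cref{thm:ttstratificationuniversality}, which is applied when establishing \cref{thm:repstratification}.
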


In addition, \cref{thma} also has applications to ordinary (as opposed to derived) representations of a finite group. Consider the category $\Lat(G,R)$ of $R$-linear $G$-representations whose underlying $R$-module is projective. Combined with the techniques of \cite{barthel_integral1}, our main theorem leads to a `generic' classification of objects in $\Lat(G,R)$:

\begin{thmx}[\cref{cor:classicalrepconsequences}]
Let $G$ be a finite group and $R$ a regular commutative ring. Cohomological support induces a bijection 
\[
\begin{Bmatrix}
\text{Non-zero localizing tensor } \\
\text{ideals of } \cat \Lat(G,R)
\end{Bmatrix} 
\xymatrix@C=2pc{ \ar[r]^-{\sim} &}
\begin{Bmatrix}
\text{Subsets of}  \\
\Proj(H^*(G;R))
\end{Bmatrix}.
\]
\end{thmx}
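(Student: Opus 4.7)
The plan is to deduce the classification of lattices from \cref{thma} via a two-step descent, closely following the strategy introduced in \cite{barthel_integral1}. The first step passes from $\Rep(G,R)$ to the stable module category $\StMod(G,R)$, where the relevant spectrum drops from $\Spec^h(H^*(G;R))$ to $\Proj(H^*(G;R))$; the second descends further from $\StMod(G,R)$ to the exact subcategory $\Lat(G,R)$.

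For Step~1, observe that $\StMod(G,R)$ arises from $\Rep(G,R)$ as a smashing localization (equivalently, a finite-localization Verdier quotient) that annihilates the free representation $R[G]$. Under the stratification of $\Rep(G,R)$, this amounts to deleting the unique closed point of $\Spec^h(H^*(G;R))$ corresponding to the irrelevant ideal of positive-degree cohomology. Invoking the abstract localization theorem for stratified categories from \cite{bhs1}, \cref{thma} then yields a bijection between localizing tensor ideals of $\StMod(G,R)$ and subsets of $\Proj(H^*(G;R))$.

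For Step~2, let $\pi \colon \Lat(G,R) \to \StMod(G,R)$ be the canonical comparison functor. To each non-zero localizing tensor ideal $\mathcal{L} \subseteq \Lat(G,R)$ we associate $\overline{\mathcal{L}} := \Loco{\pi(\mathcal{L})}$, and conversely to each localizing tensor ideal $\mathcal{J} \subseteq \StMod(G,R)$ we associate $\pi^{-1}(\mathcal{J}) \cap \Lat(G,R)$. Injectivity of the first assignment reduces to showing that a lattice $M$ with $\pi(M) \in \overline{\mathcal{L}}$ sits in a short exact sequence whose remaining terms lie in $\mathcal{L}$ together with a projective lattice, and projective lattices are already contained in any non-zero localizing tensor ideal of $\Lat(G,R)$. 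Surjectivity hinges on showing that every localizing tensor ideal of $\StMod(G,R)$ is generated by images of lattices, which follows from the fact that the compact objects of $\StMod(G,R)$ are generated by lattices together with the compact generation built into Step~1.

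The main obstacle is a Carlson-type construction producing, for each homogeneous prime $\frakp \in \Proj(H^*(G;R))$, a lattice whose cohomological support equals $\overline{\{\frakp\}}$; this is what distinguishes the lattice-level classification from the stratification of $\StMod(G,R)$. The construction must be extended from the field case (and from rings of integers in number fields, as in \cite{barthel_integral1}) to an arbitrary regular commutative ring $R$, with the regularity hypothesis entering to ensure that the relevant syzygy and Koszul constructions remain within $\Lat(G,R)$; once this is in place, the two reverse assignments above can be checked to be mutually inverse and the claimed bijection follows.
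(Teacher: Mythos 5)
Your Step~1 matches the paper's: \cref{thm:stmodstratification} is obtained from \cref{thm:repstratification} by finite localization (the paper uses \cite[Cor.~5.5]{bhs1} together with the identification $\Spc(\StMod^c(G,R)) \cong \Proj(H^*(G;R))$ from \cite{barthel_integral1}).

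Your Step~2, however, diverges from the paper and contains a gap. The paper does not re-derive the passage from $\StMod(G,R)$ to $\Lat(G,R)$: it cites \cite[Lem.~6.23 and Rem.~6.25]{barthel_integral1}, which already establish the precise dictionary between localizing tensor ideals of the Frobenius exact category $\Lat(G,R)$ and localizing tensor ideals of the tt-category $\StMod(G,R)$, independently of any stratification. Once that dictionary is in hand, \cref{thm:stmodstratification} gives the bijection immediately. You instead sketch the back-and-forth maps directly and then declare that ``the main obstacle is a Carlson-type construction producing, for each $\frakp$, a lattice whose cohomological support equals $\overline{\{\frakp\}}$,'' to be carried out over a general regular $R$. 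This misidentifies where the content lies: the Koszul-object/Carlson-module input is already consumed in proving stratification of $\StMod(G,R)$ (via \cref{thma}), and is not what is needed to pass from $\StMod(G,R)$ to $\Lat(G,R)$. What \emph{is} needed---and what you leave as a sketch---is the verification that $\cat L \mapsto \Loco{\pi(\cat L)}$ and $\cat J \mapsto \pi^{-1}(\cat J) \cap \Lat(G,R)$ are well defined (in particular that the latter is a localizing tensor ideal in the exact sense) and mutually inverse up to the projective lattices. That is exactly the content of \cite[Lem.~6.23]{barthel_integral1}; without invoking it or reproving it, your argument is incomplete, and the ``main obstacle'' you flag would not close the gap even if resolved.
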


A number of further applications are collected in \cref{ssec:applications}.

\subsection{Strategy of proof}

The starting point for our approach to the classification of localizing ideals in the category of $R$-linear $G$-representations is a suitable version of the stable module category for general coefficients. The subtleties between different choices of this category prevented progress on stratification for general coefficients for about a decade, see \cite{BensonIyengarKrause13}. 

In \cite{barthel_integral1}, we therefore introduced and studied a derived category of representations $\Rep(G,R)$, building on earlier ideas of Mathew and A.~Krause. This category is constructed by means of higher algebra, generalizes well to ring spectra coefficients, while also connecting in the expected way to classical algebraic categories of representations. The key new observation of this paper is that the stratification of  $\Rep(G,R)$ may be conveniently studied using equivariant homotopy theory. 

If $\underline{R}_G$ denotes the Borel-completion of $R$ and $\Mod_{\Sp_G}(\underline{R}_G)$ the associated category of equivariant modules, generation by permutation modules due to Rouquier, Mathew, and Balmer--Gallauer yields a symmetric monoidal equivalence
\[
\xymatrix{\Mod_{\Sp_G}(\underline{R}_G) \ar[r]^-{\sim} & \Rep(G,R).}
\]
This translates the representation-theoretic stratification problem to one in equivariant homotopy theory. 

As already observed in \cite{barthel_integral1}, stratification for $\Mod_{\Sp_G}(\underline{R}_G)$ for $R$ a commutative Noetherian ring and any finite group $G$ reduces to the case of elementary abelian subgrous $E$ in $G$. For regular $R$, an elaboration on the modular lifting technique of \cite{barthel_integral1} together with Lau's work \cite{lau_spcdmstacks} then reduces the problem further to coefficients in $R/p$. Importantly, one has to note that $R/p$ is, in general, no longer regular. Nevertheless, unipotence allows us to translate the stratification problem further to the stratification of the non-equivariant category of module spectra over the cochains $C^*(BE,R/p)$. 

Building on an unpublished idea of Mathew, spectral Galois ascent then allows us to replace the elementary abelian group $E$ by a torus of the same rank. A formality result coupled with a stratification theorem for formal dg-algebras due to Benson, Iyengar, and Krause then finishes the proof. 

\subsection{Comments on the methodology}

The reader might wonder to what extent the techniques used here are essential to establish stratification for algebraic categories such as $\Rep(G,R)$ for commutative rings $R$, in particular pertaining to the tools from equivariant homotopy theory and higher algebra. More geodesic approaches to our results would indeed be possible; for example, the passage from an algebraic manifestation of $\Rep(G,R)$ to modules over certain cochain algebras could be modelled purely algebraically. However, with a view towards future applications to questions about spectral representations of finite groups, i.e., $\Rep(G,R)$ with ring spectra coefficients, such a formulation would be insufficient. 

In fact, establishing a connection between representation theory and stable (equivariant) homotopy theory might be of independent interest. From a bird's-eye view, it highlights the utility of the tt-geometry, but it also presents practical advantages. For example, homotopy theoretic techniques allow us to give a short proof of stratification for elementary abelian $p$-groups that works for coeffcients in any Noetherian algebra over a field $k$ of characteristic $p$. While a purely algebraic proof might be possible using an elaboration of the techniques from \cite[Sec.~4--8]{BensonIyengarKrause11a} for $A=k$, our argument works uniformly in this generality. 

In a different direction, an alternative proof of the main theorem inspired by the following idea has recently been announced by Benson, Iyengar, Krause, and Pevtsova \cite{bikp_fiberwisestratification}: For any residue field $\kappa$ of $R$, we can probe the category of representations via tt-functors
\[
\xymatrix{\kappa\otimes_R - \colon \Rep(G,R) \ar[r] & \Rep(G,\kappa).}
\]
Varying $\kappa$ over the Zariski spectrum of $R$, this reduces stratification for regular coefficients to the field case already established in \cite{BensonIyengarKrause11a}. A similar strategy is used in Lau's paper \cite{lau_spcdmstacks} on the Balmer spectrum of Deligne--Mumford stacks. In comparison to such strategies, we highlight three differences of our perspective: 
\begin{enumerate}
    \item The proof given in this paper proceeds from first principles and thereby reproves the main theorem of \cite{BensonIyengarKrause11a} as opposed to taking it as an input.
    \item Our arguments apply more generally to certain categories of spectral representations, i.e., categories $\Rep(G,R)$ with coefficients in commutative ring spectra $R$ or more general types of groups $G$ such as compact Lie groups, for which residue fields might not be readily available. 
    \item The homotopical enrichment is vital for descent techniques. Another instance of this is forthcoming work with Grodal and Hunt, in which we use $\infty$-categorical tools to study endotrivial modules. 
\end{enumerate}
Of particular interest to us is the case of Lubin--Tate spectra $R = E_n$, which is part of joint work in progress with Castellana, Heard, Naumann, and Pol. 

\subsection{Outline of the document}

Unwinding the strategy, an overview of the proof is given schematically in the following diagram:

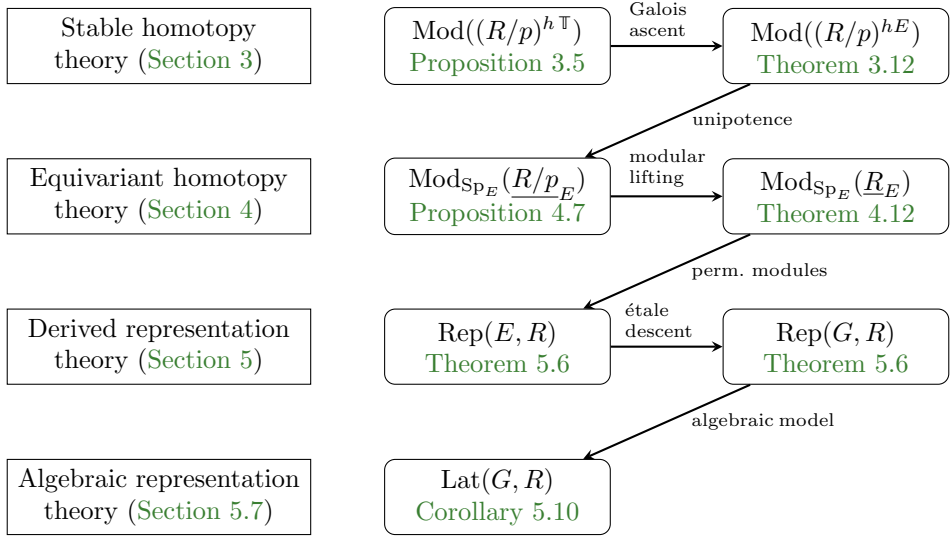
\begin{figure}[h]
\centering
\begin{tikzpicture}[node distance=2cm]
\node (A1) [area] {Stable homotopy \\theory (\cref{sec:stablehomotopy})};
\node (A2) [area, below of=A1] {Equivariant homotopy \\theory (\cref{sec:eqperm})};
\node (A3) [area, below of=A2] {Derived representation \\ theory (\cref{sec:reptheory})};
\node (A4) [area, below of=A3] {Algebraic representation \\ theory (\cref{ssec:applications})};

\node (C1) [category, right of=A1, xshift=2.5cm] {$\Mod((R/p)^{h\bT})$ \\ \cref{prop:torusstratification}};
\node (C2) [category, right of=C1, xshift=2.5cm] {$\Mod((R/p)^{hE})$ \\ \cref{thm:elabstratificationmodp}};

\node (C3) [category, right of=A2, xshift=2.5cm] {$\Mod_{\Sp_{E}}(\underline{R/p}_E)$ \\ \cref{prop:elabeqstratificationcharp}};
\node (C4) [category, right of=C3, xshift=2.5cm] {$\Mod_{\Sp_{E}}(\underline{R}_E)$ \\ \cref{thm:elabeqstratification}};

\node (C5) [category, right of=A3, xshift=2.5cm] {$\Rep(E,R)$ \\ \cref{thm:repstratification}};
\node (C6) [category, right of=C5, xshift=2.5cm] {$\Rep(G,R)$\\ \cref{thm:repstratification}};

\node (C7) [category, right of=A4, xshift=2.5cm] {$\Lat(G,R)$ \\ \cref{cor:classicalrepconsequences}};

\draw [arrow] (C1) -- node[anchor=south] {\scriptsize\parbox{1cm}{Galois ascent}} (C2);
\draw [arrow] (C2) -- node[anchor=west, xshift=0.2cm] {\scriptsize\parbox{1cm}{unipotence}} (C3);
\draw [arrow] (C3) -- node[anchor=south] {\scriptsize\parbox{1cm}{modular lifting}} (C4);
\draw [arrow] (C4) -- node[anchor=west, xshift=0.2cm] {\scriptsize\parbox{1.9cm}{perm.~modules}} (C5);
\draw [arrow] (C5) -- node[anchor=south] {\scriptsize\parbox{1.1cm}{\'etale \\ descent}} (C6);
\draw [arrow] (C6) -- node[anchor=west, xshift=0.2cm] {\scriptsize\parbox{1.9cm}{algebraic model}} (C7);
\end{tikzpicture}
\caption{Leitfaden.}
\end{figure}

The same diagram serves as an outline of the contents of the paper; we refer to the introductions of the individual sections for further details.

\subsection{Conventions and terminology}

In this paper, we usually work with suitable homotopical enrichments of (tensor-)triangulated categories; to fix ideas, our choice is the theory of (symmetric monoidal) stable $\infty$-categories developed by Lurie \cite{HTTLurie,HALurie}. Some parts of the paper could equally well be formulated within the context of triangulated categories only, others (such as Galois ascent) require more structure. Since all categories appearing here admit natural models---in fact, they may be constructed model-independently and internally to $\infty$-categories---this poses no issues. 
In particular, the term \emph{tt-category} usually refers to a model of a tensor-triangulated category, and similarly for tt-functors.

By\emph{commutative ring spectrum} we mean an $\mathbb{E}_{\infty}$-ring spectrum, i.e., a coherently commutative ring spectrum in the symmetric monoidal $\infty$-category of spectra. When we wish to emphasize that the ring in question is an ordinary ring, we will also add the adjective \emph{discrete}, as in \emph{discrete commutative algebra}. 

\subsection*{Acknowledgements}

I would like to thank Scott Balchin, Paul Balmer, Nat\`alia Castellana, Frank Gounelas, Drew Heard, and Henning Krause for discussions related to this work. 

\section{Preliminaries on tt-geometry and stratification}\label{sec:ttgeometry}

This section provides a summary of the relevant material from tensor-triangular geometry and the theory of stratification on which our later arguments are based. 

\subsection{Recollections on tensor-triangular geometry}

We begin with a rapid review of the key players in tensor-triangular geometry for our purposes, mostly to fix notation. For more details and context, we refer the reader to \cite{Balmer05a}.

Let $\cat K$ be an essentially small tensor-triangulated category. Tensor-triangular geometry studies the structure of $\cat K$ by viewing it as a geometric object over an associated spectral topological space, its Balmer spectrum $\Spc(\cat K)$.
The construction of the spectrum is accompanied by a support function which takes values in certain subsets of $\Spc(\cat K)$:
\begin{equation}\label{eq:Balmersupport}
    \xymatrix{\supp\colon \Ob\cat K \ar[r] & \{\text{closed subsets of } \Spc(\cat K)\}.}
\end{equation}
Assuming from now on that $\Spc(\cat K)$ is a Noetherian space, which covers all examples of tt-categories appearing in this paper, the Thomason subsets coincide with the specialization closed ones. Balmer establishes a universal property for $\supp$ and proves that it classifies thick tensor ideals of $\cat K$. Explicitly, if $\cat K$ is rigid, the support function induces a bijection
\[
\supp\colon\begin{Bmatrix}
\text{Thick tensor ideals} \\
\text{of } \cat K
\end{Bmatrix} 
\xymatrix@C=2pc{ \ar[r]^-{\sim} &}
\begin{Bmatrix}
\text{Specialization closed} \\
\text{subsets of } \Spc(\cat K)
\end{Bmatrix}.
\]
As an important approximation to the Balmer spectrum, Balmer \cite{Balmer10b} constructed a comparison map between $\Spc(\cat K)$ and the Zariski spectrum of the ring of endomorphisms of the unit $\unit$ of $\cat K$. In more detail, let $R_{\cat K}^{\bullet} = [\unit,\unit]^{\bullet}$ be the graded commutative endomorphism ring of $\unit$. There is a continuous map
\begin{equation}\label{eq:Balmercomparisonmap}
    \xymatrix{\rho^{\bullet} = \rho_{\cat K}^{\bullet}\colon \Spc(\cat K) \ar[r] & \Spec^h(R_{\cat K}^{\bullet}),}
\end{equation}
where $\Spec^h$ denotes the homogeneous Zariski spectrum. The construction of $\rho^{\bullet}$ is natural in tt-functors; moreover, Balmer proves (\cite[Thm.~7.3]{Balmer10b}) that $\rho_{\cat K}^{\bullet}$ is surjective if $R_{\cat K}^{\bullet}$ is a graded coherent commutative ring.

\subsection{Recollections on stratification}\label{ssec:recstratification}

The main abstract tool used in this paper is the theory of stratification relative to the Balmer spectrum developed in \cite{bhs1}.

\begin{Conv}\label{conv:stratification}
In this subsection, we let $\cat T$ be a rigidly-compactly generated tt-category. In addition, we assume that the spectrum $\Spc(\cat T^c)$ is Noetherian.\footnote{This assumption is mostly for simplicity; the theory extends to weakly Noetherian spectra as well, see \cite{bhs1}.}
\end{Conv}

Under suitable point-set topological assumptions \cite{BalmerFavi11, bhs1} on $\Spc(\cat T^c)$, such as the one stipulated in \cref{conv:stratification}, the support function $\supp$ of \eqref{eq:Balmersupport} can be extended to all of $\cat T$:
\begin{equation}\label{eq:BFsupport}
    \xymatrix{\Supp\colon \Ob\cat T \ar[r] & \bP(\Spc(\cat T^c)) := \{\text{Subsets of } \Spc(\cat T^c)\};}
\end{equation}
we refer to this function as the \emph{Balmer--Favi notion of support}. In particular, it has the property that $\Supp(t) = \supp(t)$ for all $t\in \cat T^c$. Moreover, setting $\Supp(\cat L) = \bigcup_{l \in \cat L}\Supp(l)$, we have $\Supp(\Loco{t}) = \Supp(t)$ for any $t\in \cat T$. Therefore, the Balmer--Favi notion of support induces a function defined on all localizing ideals of $\cat T$, which we will usually denote by $\LOCO(\cat T)$.

\begin{Def}\label{def:stratification}
A compactly generated big tt-category $\cat T$ is said to be \emph{stratified} if the map 
\[
\Supp\colon\begin{Bmatrix}
\text{Localizing tensor ideals of } \cat T
\end{Bmatrix} 
\xymatrix@C=2pc{ \ar[r]^-{\sim} &}
\begin{Bmatrix}
\text{Subsets of } \Spc(\cat T^c)
\end{Bmatrix}
\]
is a bijection.
\end{Def}

\begin{Rem}
More precisely, the stratification condition applies to the collection of all \emph{set-generated} localizing ideals of $\cat T$. In light of \cite[Prop.~3.5]{bhs1}, stratification in this sense implies that all localizing ideals of $\cat T$ are set-generated, which is why we leave the generated-by-a-set property implicit here.
\end{Rem}

Under \cref{conv:stratification}, a tt-category $\cat T$ is stratified if and only if the following \emph{minimality condition} is satisfied for all $\cat P \in \Spc(\cat T^c)$:
\begin{itemize}
    \item[] (Minimality) There are no nonzero proper localizing ideals contained in $\Gamma_{\cat P}\cat T = \SET{t \in \cat T}{\Supp(t) \subseteq \{\cat P\}}$.
\end{itemize}
In joint work with Heard and Sanders, we showed that this is in fact the universal theory of stratification (see \cite[Sec.~7]{bhs1} for details) and establish several desirable properties. Of particular importance are the behaviour under base-change (ascent and descent), discussed in more detail in \cref{ssec:ttascentanddescent}.

Extending work of Hovey, Palmieri, and Strickland \cite{HoveyPalmieriStrickland97}, Benson, Iyengar, and Krause \cite{BensonIyengarKrause08,BensonIyengarKrause11b} have developed a theory that provides more explicit conditions for checking stratification. It depends on an auxiliary Noetherian graded commutative ring $R$ acting on $\cat T$, which is used to construct local cohomology functors $\Gamma_{\frak p}^{R \circlearrowright \cat T}$ for $\frak p \in \Spec^h(R)$. The corresponding \emph{BIK-notion of support} is then defined, for any $t \in \cat T$, by
\[
\Supp^{R \circlearrowright \cat T}(t) = \SET{\frak p \in \Spec^h(R)}{\Gamma_{\frak p}^{R \circlearrowright \cat T}(t) \neq 0}.
\]
The categories $\Gamma_{\frak p}^{R \circlearrowright \cat T}\cat T$ are localizing ideals in $\cat T$. Following \cite{BensonIyengarKrause11b}, we say that $\cat T$ is \emph{BIK-stratified} (with respect to the action of $R$ on $\cat T$) if $\Gamma_{\frak p}^{R \circlearrowright \cat T}\cat T$ is zero or minimal for any $\frak p \in \Spec^h(R)$. With this terminology at hand, we can state the following comparison result \cite[Cor.~7.11]{bhs1}:

\begin{Thm}\label{thm:ttstratificationuniversality}
Suppose $\cat T$ is BIK-stratified by the action of a graded commutative Noetherian ring $R$. Then $\cat T$ is stratified (\cref{def:stratification}) and there exists a canonical homeomorphism $\Supp^{R \circlearrowright \cat T}(\cat T) \xrightarrow{\sim}\Spc(\cat T^c)$. Under this homeomorphism, $\Supp^{R \circlearrowright \cat T}$ coincides with the Balmer--Favi notion of support $\Supp$.

Moreover, if $R = [\unit,\unit]^{\bullet}$ and $\cat T$ is stratified by the canonical action of $R$ on $\cat T$, then Balmer's comparison map \eqref{eq:Balmercomparisonmap},
\[
\xymatrix{\rho^{\bullet}\colon \Spc(\cat T^c) \ar[r]^-{\cong} & \Spec^h(R),}
\]
is a homeomorphism.
\end{Thm}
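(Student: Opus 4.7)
The plan is to build a bridge between the two support theories via the Balmer comparison map associated to the $R$-action, and then transport BIK-minimality to Balmer--Favi minimality. Concretely, the structure map $R \to [\unit,\unit]^\bullet$ of the action, composed with \eqref{eq:Balmercomparisonmap}, produces a continuous map $\sigma\colon \Spc(\cat T^c) \to \Spec^h(R)$; the goal is to show that $\sigma$ restricts to the asserted homeomorphism $\Spc(\cat T^c) \xrightarrow{\sim} \Supp^{R \circlearrowright \cat T}(\cat T)$ under which the two support theories coincide.

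The first step is to compare the supports on compact objects. For $t \in \cat T^c$, I would argue that $\Supp^{R \circlearrowright \cat T}(t) = \sigma(\supp(t))$ by computing $\Gamma_{\frak p}^{R \circlearrowright \cat T}(t)$ via Koszul objects built from homogeneous generators of $\frak p$ and matching this against Balmer's tensor-nilpotence characterization of $\supp(t)$. Extending to arbitrary $t \in \cat T$ via the Balmer--Favi construction of $\Supp$ (which, under \cref{conv:stratification}, decomposes via the point-filtrations $\gamma_{\cat P}$ associated to the Noetherian specialization order) then yields $\Supp^{R \circlearrowright \cat T}(t) = \sigma(\Supp(t))$ for all $t$, together with a natural identification of the BIK local cohomology $\Gamma_{\frak p}^{R \circlearrowright \cat T}\cat T$ with the Balmer--Favi fibre $\Gamma_{\cat P}\cat T$ whenever $\sigma(\cat P) = \frak p$.

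Next, the BIK-stratification hypothesis forces $\sigma$ to be injective on the BIK-support: if distinct $\cat P \neq \cat Q$ in $\Spc(\cat T^c)$ satisfied $\sigma(\cat P) = \sigma(\cat Q) = \frak p$, then $\Gamma_{\cat P}\cat T$ and $\Gamma_{\cat Q}\cat T$ would embed as distinct nonzero localizing subideals of $\Gamma_{\frak p}^{R \circlearrowright \cat T}\cat T$, contradicting minimality. Surjectivity onto $\Supp^{R \circlearrowright \cat T}(\cat T)$ follows from the support comparison applied to compact generators. Since $\sigma$ is a continuous map of Noetherian spectral spaces that restricts to a bijection, a standard spectral-space argument upgrades it to a homeomorphism. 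Under this identification, Balmer--Favi minimality for $\cat T$ is exactly BIK-minimality, so $\cat T$ is stratified in the sense of \cref{def:stratification}.

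For the moreover claim with $R = [\unit,\unit]^\bullet$, stratification gives a bijection between localizing tensor ideals and arbitrary subsets of $\Spc(\cat T^c)$; restricting to the compactly generated ones recovers Balmer's classification of radical thick tensor ideals of $\cat T^c$, which forces $\rho^\bullet$ to be injective, while surjectivity holds under the graded coherence of $R_{\cat K}^\bullet$ by \cite[Thm.~7.3]{Balmer10b}. Together with continuity, this yields the desired homeomorphism. The main technical obstacle throughout is the first step: precisely matching the Balmer--Favi idempotent functors with BIK local cohomology on non-compact objects, which is where both the Noetherian hypothesis on $\Spc(\cat T^c)$ and the comparison between Koszul objects and the BIK machinery enter crucially.
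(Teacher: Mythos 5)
Note first that the paper itself does not prove this theorem: it is quoted verbatim from \cite[Cor.~7.11]{bhs1}, so there is no in-text argument to compare against. With that said, your skeleton --- form $\sigma$ from $\rho^\bullet$ and the structure map $R\to[\unit,\unit]^\bullet$, establish a support comparison, extract injectivity of $\sigma$ from BIK-minimality, transfer minimality, and finally treat the homeomorphism --- is the right shape and broadly parallels the proof in \cite{bhs1}.

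The genuine gap is concentrated in your first two paragraphs, and you flag it yourself. You propose to establish $\Supp^{R\circlearrowright\cat T}(t)=\sigma(\Supp(t))$ for all $t\in\cat T$, together with the identification $\Gamma_{\frak p}^{R\circlearrowright\cat T}\cat T = \Gamma_{\cat P}\cat T$ whenever $\sigma(\cat P)=\frak p$, as a preliminary step, and only afterwards invoke BIK-minimality. This cannot be sequenced that way. The very statement of the idempotent identification presupposes that $\sigma^{-1}(\frak p)$ is a singleton --- which is precisely what the minimality argument in your third paragraph is meant to deliver; and the passage from compact objects (where a Koszul-object/tensor-nilpotence comparison is plausible) to arbitrary objects requires the local-to-global principle for the $R$-action, which is a consequence of BIK-stratification together with the Noetherian hypotheses, not an unconditional fact. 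In other words, the support comparison and the exploitation of minimality must be interleaved rather than stacked; as written, the argument is circular at exactly the place you call ``the main technical obstacle.'' Two smaller points: a continuous bijection of Noetherian spectral spaces is not automatically a homeomorphism, so you need the additional input that $\sigma$ matches specialization-closed subsets on both sides (this is where the two thick-ideal classifications on $\cat T^c$ actually enter). And in the ``moreover'' part, injectivity of $\rho^\bullet$ does not follow from restricting the localizing-ideal bijection to compactly generated ideals --- Balmer's classification of thick ideals of $\cat T^c$ holds with no reference to $\rho^\bullet$ at all; rather, when $R=[\unit,\unit]^\bullet$ one has $\sigma=\rho^\bullet$, so injectivity is the statement you already proved, and surjectivity is \cite[Thm.~7.3]{Balmer10b} for Noetherian $R$.
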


\subsection{Ascent and descent for stratification}\label{ssec:ttascentanddescent}

Stratification over the Balmer spectrum in the sense of \cref{def:stratification} enjoys a number of good properties, see \cite{bhs1}, including permanence under certain base-change functors. In this section, we review these results and exhibit two new variants, one about ascent along certain spectral Galois extensions, and one regarding descent. We continue to write $\LOCO(\cat T)$ for the collection of (set-generated) localizing tensor ideals in a rigidly-compactly generated tt-category $\cat T$.

\subsubsection*{Galois ascent}

We begin with a novel ascent result for stratification along spectral Galois extensions with Galois group a connected topological group of the homotopy type of a finite CW-complex. This idea originates in unpublished work of Mathew \cite{mathew_torus}; a sharper result removing the connectivity assumption is part of forthcoming work with Castellana, Heard, Naumann, and Pol. 

\begin{Lem}\label{lem:Suppcompatibility}
Let $f^*\colon \cat S \to \cat T$ be a tt-functor with conservative right adjoint $f_*$. If $\cat S$ is stratified, then there are commutative squares
\begin{equation}\label{eq:Suppcompatibility}
    \vcenter{
    \xymatrix{\LOCO(\cat S) \ar[r]^-{\Phi} \ar[d]_-{\Supp} & \LOCO(\cat T) \ar[d]^-{\Supp} \\
    \bP(\Spc(\cat S^c)) \ar[r]_{\varphi^{-1}(-)} & \bP(\Spc(\cat T^c)),}
    }
\end{equation}
where $\varphi\colon \Spc(\cat T^c) \to \Spc(\cat S^c)$ is the map on spectra induced by $f^*$ and we set $\Phi(\cat L) = \Loco{f^*(\cat L)}$ for $\cat L \in \LOCO(\cat S)$.
\end{Lem}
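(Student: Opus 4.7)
The plan is to reduce the square's commutativity to the pointwise identity
\[
\Supp^{\cat T}(f^*(s)) = \varphi^{-1}(\Supp^{\cat S}(s)) \qquad \text{for all } s \in \cat S,
\]
since the Balmer--Favi support satisfies $\Supp(\Loco{X}) = \bigcup_{x \in X}\Supp(x)$ and inverse image commutes with unions; once the identity is in hand, both compositions in \eqref{eq:Suppcompatibility} reduce to $\bigcup_{l \in \cat L}\varphi^{-1}(\Supp(l))$ on any $\cat L \in \LOCO(\cat S)$.

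For the inclusion $\Supp(f^*(s)) \subseteq \varphi^{-1}(\Supp(s))$, which needs neither stratification nor conservativity, I would invoke the tt-naturality of the Balmer--Favi idempotents: since $f^*$ preserves colimits and is symmetric monoidal, it carries the Thomason smashing idempotents $e_V^{\cat S}$ and $f_V^{\cat S}$ to those of $\varphi^{-1}(V)$. A short bookkeeping then yields the key identity $\Gamma_{\cat P}^{\cat T} \otimes f^*(\Gamma_{\varphi(\cat P)}^{\cat S}) \simeq \Gamma_{\cat P}^{\cat T}$ acting on $\unit_{\cat T}$, from which one computes $\Gamma_{\cat P}^{\cat T}(f^*(s)) \simeq \Gamma_{\cat P}^{\cat T} \otimes f^*(\Gamma_{\varphi(\cat P)}^{\cat S}(s))$; nonvanishing of the left forces $\Gamma_{\varphi(\cat P)}^{\cat S}(s) \neq 0$, i.e. $\varphi(\cat P) \in \Supp(s)$.

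The reverse inclusion is the main step and is where the hypotheses on $\cat S$ and $f_*$ enter. Given $\cat P \in \varphi^{-1}(\Supp(s))$ with $\cat Q := \varphi(\cat P)$, we have $\Gamma_{\cat Q}^{\cat S}(s) \neq 0$. By stratification of $\cat S$, the fibre $\Gamma_{\cat Q}^{\cat S}\cat S$ is a minimal nonzero localizing tensor ideal, so $\Loco{\Gamma_{\cat Q}^{\cat S}(s)} = \Gamma_{\cat Q}^{\cat S}\cat S$ contains $\Gamma_{\cat Q}^{\cat S}(\unit_{\cat S})$. Applying $f^*$ and tensoring with $\Gamma_{\cat P}^{\cat T}(\unit_{\cat T})$, the idempotent identity from the previous paragraph collapses the test object to $\Gamma_{\cat P}^{\cat T}(\unit_{\cat T})$ while the generator becomes $\Gamma_{\cat P}^{\cat T}(f^*(s))$, giving $\Gamma_{\cat P}^{\cat T}(\unit_{\cat T}) \in \Loco{\Gamma_{\cat P}^{\cat T}(f^*(s))}$; since $\cat P$ is a genuine point of the Noetherian spectrum $\Spc(\cat T^c)$ the left-hand side is nonzero, forcing $\Gamma_{\cat P}^{\cat T}(f^*(s)) \neq 0$. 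Conservativity of $f_*$ enters to rule out degeneracy of the idempotent bookkeeping, via the projection formula $f_* f^*(-) \simeq - \otimes f_*(\unit_{\cat T})$. The main technical obstacle is this careful verification of the compatibility between $f^*$ and the Balmer--Favi construction; once secured, the conclusion is forced by minimality of the fibres and non-triviality of $\Gamma_{\cat P}^{\cat T}(\unit_{\cat T})$ at genuine points.
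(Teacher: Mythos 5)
Your reduction to the pointwise identity $\Supp(f^*s) = \varphi^{-1}(\Supp(s))$ is exactly what the paper does, and the bookkeeping that converts the pointwise identity into commutativity of the square is the same on both sides. The difference is in how the identity itself is obtained: the paper simply cites the Avrunin--Scott identities from \cite{bchs1}, whereas you re-derive them from scratch. Your derivation is sound: the forward inclusion follows from tt-naturality of the Rickard and Balmer--Favi idempotents (so that $\Gamma_{\cat P}^{\cat T}(\unit) \otimes f^*(\Gamma_{\varphi(\cat P)}^{\cat S}(\unit)) \simeq \Gamma_{\cat P}^{\cat T}(\unit)$, whence $\Gamma_{\cat P}^{\cat T}(f^*s) \simeq \Gamma_{\cat P}^{\cat T}(\unit)\otimes f^*(\Gamma_{\varphi(\cat P)}^{\cat S}(s))$), and the reverse inclusion follows from applying $f^*$ to the minimality relation $\Gamma_{\cat Q}^{\cat S}(\unit_{\cat S}) \in \Loco{\Gamma_{\cat Q}^{\cat S}(s)}$ (valid by stratification of $\cat S$ once $\Gamma_{\cat Q}^{\cat S}(s)\neq 0$), tensoring with $\Gamma_{\cat P}^{\cat T}(\unit)$, and using that $\Gamma_{\cat P}^{\cat T}(\unit)\neq 0$ at every point of a Noetherian spectrum. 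What your write-up buys is self-containedness; what it costs is that one must actually carry out the idempotent computation $f^*(e_V)\simeq e_{\varphi^{-1}V}$, $f^*(f_V)\simeq f_{\varphi^{-1}V}$ and the resulting $\Gamma_{\cat P}^{\cat T}\otimes f^*(\Gamma_{\varphi(\cat P)}^{\cat S})\simeq\Gamma_{\cat P}^{\cat T}$, which you gesture at but do not verify.

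One thing to fix: your remark that ``conservativity of $f_*$ enters to rule out degeneracy of the idempotent bookkeeping, via the projection formula'' does not correspond to anything in your argument. As written, neither the forward nor the reverse inclusion invokes $f_*$ at all --- the forward direction uses only naturality of the idempotents, and the reverse direction uses stratification of $\cat S$ together with the fact that $\Gamma_{\cat P}^{\cat T}(\unit_{\cat T})$ is nonzero, which is a statement internal to $\cat T$ at a (weakly visible, hence in the Noetherian case arbitrary) point. You should either locate precisely where conservativity is load-bearing, or observe that under the Noetherian hypothesis and stratification of the \emph{source} this hypothesis is not used in your argument, and is carried by the lemma's statement for consistency with the reference \cite{bchs1} rather than for your particular derivation. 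Leaving a spurious appeal to the projection formula in the proof would be misleading.
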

\begin{proof}
Under the assumptions of the lemma, the Avrunin--Scott identities, proven in this context in \cite{bchs1}, establish an equality for any $s \in \cat S$:
\[
\Supp(f^*s) = \varphi^{-1}\Supp(s).
\]
Consider then some localizing ideal $\cat L = \Loco{\SET{s_i}{i\in I}}$ in $\cat S$. We compute:
\begin{align*}
    \Supp(\Phi(\cat L)) & = \Supp(\Loco{\SET{f^*s_i}{i\in I}}) 
    = \bigcup_{i \in I} \Supp(f^*s_i)  
    = \bigcup_{i \in I} \varphi^{-1}\Supp(s_i) \\
    & = \varphi^{-1}\Supp(\Loco{\SET{s_i}{i\in I}}) 
    = \varphi^{-1}\Supp(\cat L),
\end{align*}
as desired.
\end{proof}

We freely use the theory of spectral Galois extensions developed by Rognes \cite{Rognes08}, and refer to his work for their basic properties. In particular, a Galois extension $A \to B$ is called \emph{faithful} if the corresponding induction functor is conservative. 

\begin{Prop}\label{prop:galoisascent}
Suppose $A \to B$ is a faithful $G$-Galois extension of commutative ring spectra with connected Galois group $G$ of the homotopy type of a finite CW-spectrum. If $\Mod(A)$ is stratified, then so is $\Mod(B)$. Moreover, the tt-functor $f^*$ induces a homeomorphism
\[
\xymatrix{\varphi\colon\Spc(\Mod^c(B)) \ar[r]^-{\cong} & \Spc(\Mod^c(A))}.
\]
\end{Prop}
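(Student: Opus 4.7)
The plan is to prove the two claims in sequence: first that $\varphi$ is a homeomorphism, and then that stratification ascends. Both steps exploit the hypothesis that $G$ is connected via the principle that a continuous $G$-action on a set-valued invariant such as the Balmer spectrum factors through $\pi_0(G)$. For the homeomorphism, surjectivity of $\varphi$ follows from the conservativity of $f^* = B\otimes_A -$ (faithfulness of the extension) combined with Balmer's surjectivity criterion. The $G$-Galois structure makes $\Mod(B)$ a $G$-category with homotopy fixed points $\Mod(A)$, and a tt-descent argument identifies $\varphi$ with the quotient map by the induced $G$-action on $\Spc(\Mod^c(B))$. This action arises from a map $BG \to B\mathrm{Aut}(\Mod^c(B))$ and only its image in the $\pi_0$ of the automorphism group affects the action on $\Spc$; connectedness of $G$ then makes the action trivial, so orbits are singletons and $\varphi$ is a continuous bijection. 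Closedness follows from dualizability of $B$ over $A$ (a consequence of CW-finiteness of $G$ and the Rognes Galois axioms), which makes $f^*$ preserve compacts and ensures the requisite map on Thomason subsets.

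With $\varphi$ a homeomorphism in hand, we verify the minimality criterion (\cref{ssec:recstratification}) at each $\cat Q \in \Spc(\Mod^c(B))$. Set $\cat P = \varphi(\cat Q)$; the Avrunin--Scott identity underlying \cref{lem:Suppcompatibility} yields $\Gamma_{\cat Q}\unit \simeq f^*\Gamma_{\cat P}\unit$. For any nonzero $x \in \Gamma_{\cat Q}\Mod(B)$, conservativity of $f_*$ gives $f_*(x) \neq 0$, and $\Supp(f_*x) \subseteq \{\cat P\}$ places $f_*(x)$ in $\Gamma_{\cat P}\Mod(A)$. The assumed stratification of $\Mod(A)$ then yields $\Gamma_{\cat P}\unit \in \Loco{f_*(x)}$; applying $f^*$ and using its cocontinuity, $\Gamma_{\cat Q}\unit \in \Loco{f^*f_*(x)}$. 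Now the Rognes Galois axiom $B \otimes_A B \simeq F(G_+, B)$ combined with CW-finiteness of $G$ realizes $f^*f_*(x) \simeq F(G_+, x)$ as a finite homotopy limit of shifts of $x$, so $f^*f_*(x) \in \Loco{x}$. Consequently $\Gamma_{\cat Q}\unit \in \Loco{x}$, which forces $\Loco{x} = \Gamma_{\cat Q}\Mod(B)$ and establishes minimality.

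The main technical obstacle is the last step placing $f^*f_*(x)$ in $\Loco{x}$: for finite discrete $G$, $f^*f_*(x) \simeq \bigoplus_G x$ makes the inclusion immediate, but for positive-dimensional connected $G$ one genuinely exploits the cellular filtration of $G_+$ and its dualizability to express $F(G_+, x)$ as a finite limit built from $x$. A secondary subtlety is the triviality of the $G$-action on $\Spc(\Mod^c(B))$; this uses that the coherent $G$-action on $\Mod(B)$ descends to an action on $\Spc$ which, as a set-valued invariant, only sees $\pi_0$ of the underlying automorphism $\infty$-group, and $\pi_0(G) = \ast$ by connectedness.
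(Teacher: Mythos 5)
Your proposal takes a genuinely different and more self-contained route than the paper: rather than reproving the two cited black boxes from scratch you are doing here, the paper simply specializes the commutative square of \cref{lem:Suppcompatibility} and then quotes \cite[Thm.~6.5]{mathew_torus} for bijectivity on localizing ideals and its compact analogue from the forthcoming \cite{bchnp1} for the homeomorphism on spectra. Your Step~2 is in effect a direct reproof of the relevant portion of Mathew's theorem: the chain $f_*(x)\in\Gamma_{\cat P}\Mod(A)$ via the projection formula, minimality of $\Gamma_{\cat P}\Mod(A)$, application of $f^*$, and the crucial observation that $f^*f_*(x)\simeq (B\otimes_A B)\otimes_B x\simeq F(G_+,x)\simeq DG_+\otimes x$ lies in $\Thick(x)\subseteq\Loco{x}$ by CW-finiteness of $G$. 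This is correct in outline (modulo the usual left/right bimodule bookkeeping in the identification of $B\otimes_A B$) and is the essential descent mechanism; the paper gains brevity by outsourcing it, you gain transparency by unpacking it.

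However, there is a genuine gap in Step~1. The assertion that ``a tt-descent argument identifies $\varphi$ with the quotient map by the induced $G$-action on $\Spc(\Mod^c(B))$'' is itself the nontrivial content of the cited result \cite{bchnp1}, not a routine consequence of the Galois axioms. For finite discrete Galois groups this quotient identification is a theorem requiring real work (establishing that fibers of $\varphi$ are $G$-orbits), and for connected compact Lie groups it is precisely what is being proved in the forthcoming reference; you cannot invoke it as a known fact without supplying the argument. The observation that the $G$-action on $\Spc(\Mod^c(B))$ factors through $\pi_0(G)=\ast$ is correct and is surely a key idea, but it only tells you the action is trivial---it does not by itself show that $\varphi$ collapses $G$-orbits and nothing more, which is what injectivity needs. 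The closedness claim is also too quick: preservation of compacts makes $\varphi$ spectral, but a spectral continuous bijection between Noetherian spectral spaces need not be a homeomorphism without an additional argument (e.g.\ that it reflects specializations), though this would follow once the quotient identification is in place. Finally, note that as written your Step~2 relies on the identity $\Gamma_{\cat Q}\unit\simeq f^*\Gamma_{\cat P}\unit$, which presupposes $\varphi^{-1}(\cat P)=\{\cat Q\}$, so the gap in Step~1 propagates; one can rescue stratification by replacing that identity with $\Gamma_{\cat Q}\unit\simeq\Gamma_{\cat Q}\unit\otimes f^*\Gamma_{\cat P}\unit\in\Loco{f^*\Gamma_{\cat P}\unit}$, but the homeomorphism statement of the proposition still needs the missing input.
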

\begin{proof}
The commutative square of \cref{lem:Suppcompatibility} specializes to 
\[
\xymatrix{\LOCO(\Mod(A)) \ar[r]^-{\Phi} \ar[d]_-{\Supp} & \LOCO(\Mod(B)) \ar[d]^-{\Supp} \\
    \bP(\Spc(\Mod^c(A))) \ar[r]_{\varphi^{-1}(-)} & \bP(\Spc(\Mod^c(B))).}
\]
The top and bottom horizontal maps are bijections, using \cite[Thm.~6.5]{mathew_torus} and its compact analogue \cite{bchnp1}, respectively. Since the left vertical map is a bijection by assumption, the right vertical map is a bijection as well. In other words, $\Mod(B)$ is stratified. 
\end{proof}

\subsubsection*{Descent}

The next result assembles and, in the case of Item (2) below, slightly extends some descent results for stratification from \cite{bhs1} and \cite{barthel_integral1}.

\begin{Prop}\label{prop:ttdescent}
Let $f^*\colon \cat S \to \cat T$ be a colimit preserving tt-functor between rigidly-compactly tt-categories with Noetherian spectra, and let $f_*$ be a right adjoint to $f_*$. Write $\varphi\colon \Spc(\cat T^c) \to \Spc(\cat S^c)$ for the map on spectra induced by $f^*$. Suppose that there exists a commutative algebra object $A \in \cat S$ and an equivalence of tt-categories $\cat T \simeq \Mod_{\cat S}(A)$ making the following diagram commute
\[
\xymatrix{\cat S \ar[r]^-{f^*} \ar[dr]_-{-\otimes A} & \cat T \ar[d]^-{\sim} \\
& \Mod_{\cat S}(A).}
\]
In addition, assume one of the following conditions is satisfied:
    \begin{enumerate}
        \item[(1)] 
        $f^*$ is a finite localization away from a Thomason subset $Y \subseteq \Spc(\cat S^c)$;
        \item[(2)] 
        $A \in \cat S$ is compact and $\varphi$ is injective;
        \item[(3)] 
        $A \in \cat S$ is compact and, as an algebra, faithful and separable of finite degree. 
    \end{enumerate}
If $\cat T$ is stratified, then the categories $\Gamma_{\frak p}\cat S$ are minimal for all $\frak p \in \supp(A)$.
\end{Prop}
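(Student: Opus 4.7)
For each $\frak p \in \supp(A)$, I must verify the minimality condition: every nonzero localizing ideal $\cat L_0 \subseteq \Gamma_{\frak p}\cat S$ must equal $\Gamma_{\frak p}\cat S$. My plan is to handle all three hypotheses via a common two-step scheme: first push $\cat L_0$ across $f^*$ and use the stratification of $\cat T$ to pin it down, then descend the resulting identification back to $\cat S$ using the descent property singled out by (1), (2), or (3).

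In the transfer step, which is uniform across the three cases, set $\Phi(\cat L_0) = \Loco{f^*\cat L_0}$ in $\cat T$. The Avrunin--Scott identity recalled in the proof of \cref{lem:Suppcompatibility} yields $\Supp\Phi(\cat L_0) = \varphi^{-1}\{\frak p\}$. In each case $\supp(A) \subseteq \Img(\varphi)$---tautologically for a finite localization in (1), by Balmer's image-of-spectrum theorem for the compact algebra in (2), and by the nil-faithfulness $\unit \in \Thickt{A}$ for the compact faithful separable algebra in (3)---so $\varphi^{-1}\{\frak p\}$ is non-empty and $\Phi(\cat L_0)$ is a nonzero localizing ideal of $\cat T$ supported on that fiber. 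The stratification hypothesis, applied via the minimality of each $\Gamma_{\frak q}\cat T$ for $\frak q \in \varphi^{-1}\{\frak p\}$, then forces $\Phi(\cat L_0) \supseteq \Gamma_{\frak q}\cat T$ for every such $\frak q$.

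The descent step takes a different shape in each case. In case (1), with $U = \Spc(\cat S^c)\sminus Y$, every $\frak p \in U = \supp(A)$ satisfies $\Gamma_{\frak p}\cat S = \Gamma_{\frak p}\cat T$ via the smashing equivalence, so the transfer step is already the conclusion. In case (2), injectivity of $\varphi$ collapses $\varphi^{-1}\{\frak p\}$ to a single prime $\frak q$, and the support identity makes $f^*|_{\Gamma_{\frak p}\cat S}\colon \Gamma_{\frak p}\cat S \to \Gamma_{\frak q}\cat T$ conservative; combined with dualizability of $A$ and the local--global principle available on Noetherian Balmer spectra, this should upgrade to descent for localizing ideals, so that $f^*t \in \Phi(\cat L_0)$ forces $t \in \cat L_0$. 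In case (3), faithful separability splits the multiplication $A\otimes_{\cat S} A \to A$ and yields effective descent along the monadic adjunction $f^* \dashv f_*$, which transports localizing ideals back from $\cat T$ to $\cat S$ and again gives $t \in \cat L_0$ whenever $f^*t \in \Phi(\cat L_0)$. The technical heart is the descent in case (2), where injectivity of $\varphi$ is strictly weaker than faithfulness and the conversion into genuine descent for localizing ideals relies on nil-detection at each local stratum together with the Noetherian local--global principle; cases (1) and (3) rest on the well-established descent mechanisms of smashing localization and separable descent, respectively.
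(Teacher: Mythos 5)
Your overall architecture (push along $f^*$, exploit minimality in $\cat T$, descend) matches the paper's, and your treatment of cases (1) and (3) is essentially what the paper does by citation (Zariski descent from \cite{bhs1} and separable descent from \cite{barthel_integral1}, respectively). But your handling of case (2), which is the genuinely new case and the reason the proposition is stated this way, has real gaps.

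First, your transfer step invokes the Avrunin--Scott equality $\Supp(f^*s)=\varphi^{-1}\Supp(s)$ ``recalled in the proof of \cref{lem:Suppcompatibility}'', but that lemma explicitly hypothesizes that $\cat S$ is stratified and that $f_*$ is conservative --- neither of which is available in case (2), where you are trying to \emph{prove} local minimality in $\cat S$ and where injectivity of $\varphi$ is strictly weaker than conservativity of $f^*$. What you get for free is only $\Supp(f^*s)\subseteq\varphi^{-1}\Supp(s)$; the content of case (2) is precisely to show that $f^*$ does not annihilate a nonzero object of $\Gamma_{\frak p}\cat S$. Asserting that ``the support identity makes $f^*|_{\Gamma_{\frak p}\cat S}$ conservative'' is circular.

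Second, your descent step for (2) says that dualizability of $A$ plus the local--global principle ``should upgrade to descent for localizing ideals''. The word ``should'' is where the argument stops. Concretely, the projection formula and compactness of $A$ only give $f_*\Phi(\cat L_0)\subseteq\cat L_0$, hence $A\otimes s\in\cat L_0$ for every $s\in\Gamma_{\frak p}\cat S$; without separability one cannot retract $s$ off $A\otimes s$, so $s\in\cat L_0$ does not follow. The paper addresses both issues differently: it first localizes $\cat S$ away from $\langle\frak p\rangle$ so that $\frak p$ becomes maximal (and $\{\frak p\}$ closed), and then observes that the technical Lemmas 2.17--2.19 of \cite{barthel_integral1} --- which in that reference are proved under separability --- go through verbatim with the hypothesis ``$\varphi^{-1}(\frak p)$ is a singleton'' in place of separability. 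Your sketch skips exactly that reduction and the local conservativity argument it enables, which is the technical heart of case (2).
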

\begin{proof}
Part (1) is an instance of Zariski descent for stratification, established in \cite[Cor.~5.3 and Rem.~5.4]{bhs1}, while Part (3) is proven in \cite[Thm.~2.14]{barthel_integral1}. Note that, for a finite localization away from $Y$ as in (1), $\supp(A)$ is the complement of $Y$, while in the case of (3), we have $\supp(A) = \Spc(\cat S^c)$.

It remains to verify the claim in the case of (2). Let $A \in \CAlg(\cat S^c)$ and assume that the fiber of $\varphi$ is either empty or singleton. It follows from \cite[Thm.~1.7]{Balmer18} that $\supp(A) = \im(\varphi)$. Let $\frak p \in \supp(A)$ and write $\varphi^{-1}(\frak p) = \{\frak q\}$. By localizing away from $\langle \frak p \rangle$, we may assume that $\frak p$ is maximal in $\cat C$, see for example \cite[Ex.~5.7]{Sanders21pp}. The proofs of Lemmas 2.17, 2.18, and 2.19 of \cite{barthel_integral1} go through without the assumption that $A$ is a separable algebra of finite degree, using instead that $\varphi^{-1}(\frak p)$ is a singleton. We conclude that $\Gamma_{\frak p}\cat S$ is minimal if $\Gamma_{\frak q}\cat T$ is minimal, which holds by assumption. This finishes the proof.
\end{proof}

\section{Cochain algebras with general coefficients}\label{sec:stablehomotopy}

Our first goal is to study stratification for commutative ring spectra of cochains for tori and elementary abelian groups with algebraic coefficients. The results of this section will provide the base case for applying our descent techniques in later sections; in particular, we do not strive for maximum generality here. The two main ingredients are a stratification theorem for formal dg-algebras due to Benson, Iyengar, and Krause, as well as Rognes' theory of spectral Galois extensions to pass from tori to elementary abelian groups.  A good introduction to cochain algebras from the representation theoretic point of view is contained in \cite{bensonkrause_complexes}.

\subsection{Tori}

The starting point for our approach to stratification of the derived category of representations is an especially simple category: the category of module spectra over the cochains of a torus.

\begin{Conv}
Throughout this subsection, $R$ denotes a commutative ring, not necessarily regular but later assumed to be Noetherian. Furthermore, $\bT = (S^1)^{\times r}$ is a torus of rank $r$, which acts trivially on $R$. The resulting homotopy fixed point spectrum $R^{h\bT}$ has the structure of a commutative ring spectrum which can be modelled by the commutative ring spectrum of cochains $C^*(B\bT,R)$. 
\end{Conv}

We begin with a standard formality result about cochains on tori; for the convenience of the reader, we outline a proof. 

\begin{Lem}\label{lem:formality}
For any commutative ring $R$, the underlying $\mathbb{E}_1$-$R$-algebra of $R^{h\bT}$ is formal. In algebraic terms, there is an equivalence 
\[
C^*(B\bT,R) \simeq \pi_{*}R^{h\bT}
\]
of dg-algebras over $R$, where the graded cohomology ring $\pi_{-*}R^{h\bT} \cong H^{*}(\bT;R)$ is equipped with the zero differential. 
\end{Lem}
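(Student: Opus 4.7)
The plan is to exploit the even-degree nature of the target homotopy groups. Observe first that $\pi_{-*}R^{h\bT} \cong H^*(B\bT;R) = R[x_1, \ldots, x_r]$ with $|x_i| = 2$, a polynomial ring whose generators all live in even cohomological degree. I would then construct an $\mathbb{E}_1$-$R$-algebra equivalence between $R^{h\bT}$ and the formal model $(R[x_1, \ldots, x_r], d = 0)$ by lifting generators to cocycles.

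For the rank-one case $r = 1$, the argument is essentially elementary. Choose a lift $\tilde x\colon R[-2] \to R^{hS^1}$ of the generator $x \in H^2(BS^1;R)$, and extend by the universal property of the free $\mathbb{E}_1$-$R$-algebra to a map $T_R(R[-2]) \to R^{hS^1}$. Since the generator sits in even degree, the free $\mathbb{E}_1$-$R$-algebra $T_R(R[-2]) = \bigoplus_{n \geq 0} R[-2n]$ already coincides with the polynomial ring $R[x]$ on the level of graded algebras: a single even-degree class commutes with itself for Koszul-sign reasons, so the tensor algebra is automatically commutative. The induced map on $\pi_*$ is then the identity of $R[x]$, and hence an equivalence of $\mathbb{E}_1$-$R$-algebras.

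For higher rank $r > 1$, the analogous free $\mathbb{E}_1$-$R$-algebra $T_R(R[-2]^{\oplus r}) = R\langle x_1, \ldots, x_r\rangle$ is the noncommutative tensor algebra, so freeness alone no longer produces a polynomial model. My preferred route is to appeal to the classical dga formality principle: a dg-$R$-algebra whose cohomology is a polynomial $R$-algebra on even-degree generators is formal, since the obstructions to producing a quasi-isomorphism out of the cohomology ring (with zero differential) lie in Hochschild cohomology groups $HH^{s,t}(R[x_1, \ldots, x_r])$ in certain odd total degrees, and these vanish by an explicit computation via the Koszul resolution because all generators are in even degree. Alternatively, one can try to enhance the rank-one formality to an $\mathbb{E}_\infty$-$R$-algebra equivalence and then invoke the K\"unneth identification $R^{h\bT} \simeq (R^{hS^1})^{\otimes_R r}$ in $\mathbb{E}_\infty$-$R$-algebras to propagate formality multiplicatively.

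The main obstacle I would anticipate is handling higher rank uniformly in the characteristic of $R$. Over a $\mathbb{Q}$-algebra the argument is immediate from Sullivan's minimal model theory, but in positive characteristic an $\mathbb{E}_\infty$-enhancement can be obstructed by Dyer--Lashof operations, so the obstruction-theoretic route at the $\mathbb{E}_1$-level seems to be the cleaner path. Once the relevant Hochschild vanishing is in hand, the rest of the proof is a routine lift-and-extend argument on generators.
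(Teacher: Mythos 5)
Your rank-one argument is exactly the one the paper uses: choose a map $\Sigma^{-2}R\to R^{hS^1}$ hitting the polynomial generator, extend to the free $\mathbb{E}_1$-$R$-algebra $T_R(\Sigma^{-2}R)\simeq \bigoplus_n \Sigma^{-2n}R$, and observe this is a $\pi_*$-isomorphism. You also correctly put your finger on the genuine subtlety for $r>1$: the free $\mathbb{E}_1$-$R$-algebra on $\bigoplus_{i=1}^r\Sigma^{-2}R$ is the tensor algebra, whose $\pi_*$ is the free associative ring $R\langle x_1,\dots,x_r\rangle$, which surjects onto but does not equal $R[x_1,\dots,x_r]$ once $r\ge 2$. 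The paper's proof, as written, extends the generators to a single map $\mathrm{Free}_{\mathbb{E}_1-R}(x_1,\dots,x_r)\to R^{h\bT}$ and asserts it is a $\pi_*$-isomorphism; for $r>1$ that assertion needs justification beyond what appears there, so your hesitation is warranted rather than a defect of your proposal.

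Of the two repairs you sketch, the K\"unneth route is the right one and is in fact cleaner than you anticipate. The cochain K\"unneth isomorphism $R^{h\bT}\simeq (R^{hS^1})^{\otimes_R r}$ holds as commutative ring spectra because $H^*(BS^1;R)=R[x]$ is a free $R$-module; since $\Alg_{\mathbb{E}_1}(\Mod_R)$ is a symmetric monoidal $\infty$-category, the rank-one $\mathbb{E}_1$-equivalence $R^{hS^1}\simeq HR[x]$ tensors up to an $\mathbb{E}_1$-equivalence $(R^{hS^1})^{\otimes_R r}\simeq HR[x]^{\otimes_R r}\simeq HR[x_1,\dots,x_r]$. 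No $\mathbb{E}_\infty$-formality of $R^{hS^1}$ is required, so the Dyer--Lashof obstruction you worry about does not enter: everything happens after restricting along $\mathbb{E}_1\to\mathbb{E}_\infty$. By contrast, your Hochschild obstruction argument as stated is incomplete. The obstruction class for the $A_\infty$-operation $m_n$ lives in the bigraded group $HH^{n,2-n}$ of the polynomial ring; the internal degree $2-n$ is odd only when $n$ is odd, so the odd-degree vanishing you invoke kills $m_3,m_5,\dots$ but says nothing about $m_4,m_6,\dots$, and for those the relevant Hochschild groups are not zero for degree reasons once $r\ge 4$. Making that route work would require a finer argument (or a different filtration of the obstructions) rather than a bare parity count, so I would advise going with the K\"unneth reduction, which also most directly fills in the gap in the paper's $r>1$ step.
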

\begin{proof}
The graded commutative cohomology ring of $R^{h\bT}$ is isomorphic to a graded polynomial ring over $R$,
\begin{equation}\label{eq:cohomologytorus}
\pi_*R^{h\bT} \cong R[x_1,\ldots,x_r],
\end{equation}
where each of the generators $x_i$ has (homological) degree $-2$. Represent these generators by maps $x_i\colon\Sigma^{-2}R \to R^{h\bT}$ of $R$-module spectra. The induced map 
\[
\textstyle\bigoplus_{i=1}^r\Sigma^{-2}R \to R^{h\bT}
\]
extends to a map of $\mathbb{E}_1$-$R$-algebras from the free $\mathbb{E}_1$-$R$-algebra on $\bigoplus_{i=1}^r\Sigma^{-2}R$:
\[
\mathrm{Free}_{\mathbb{E}_1-R}(x_1,\ldots,x_r) \to R^{h\bT}.
\]
By construction, this map induces an isomorphism on homotopy groups and thus exhibits the underlying $\mathbb{E}_1$-$R$-algebra of $R^{h\bT}$ as formal. Finally, Shipley's theorem \cite{shipley_hzalgebras} provides the translation into the language of dg-algebras. 
\end{proof}

\begin{Prop}\label{prop:torusstratification}
Let $R$ be a Noetherian commutative ring and $\bT = (S^{1})^{\times r}$ a torus of rank $r$. The tt-category $\Mod(R^{h\bT})$ is stratified over the graded affine space
\begin{equation}\label{eq:spctorus}
\Spc(\Mod^c(R^{h\bT})) \cong \Spec^h(\pi_*R^{h\bT}) \cong \Spec^h(R[x_1,\ldots,x_r]),
\end{equation}
where $|x_i|=-2$ for all $i$.
\end{Prop}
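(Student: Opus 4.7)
The plan is to combine the formality result of the preceding lemma with a classical stratification theorem for the derived category of a graded Noetherian ring, and then translate the conclusion into tt-geometric language via \cref{thm:ttstratificationuniversality}. Since the heavy lifting has already been outsourced to \cref{lem:formality}, the argument reduces to a bookkeeping exercise plus a citation.

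First I would use \cref{lem:formality} (together with Shipley's theorem) to replace $R^{h\bT}$ by the formal $\mathbb{E}_1$-$R$-algebra corresponding to the graded polynomial ring $S = R[x_1,\ldots,x_r]$ with $|x_i|=-2$ and zero differential. Since equivalent $\mathbb{E}_1$-algebras have equivalent module categories, this gives a symmetric monoidal equivalence $\Mod(R^{h\bT}) \simeq \Der(S)$, where the latter denotes the derived category of (differential graded) modules over $S$. The assumption that $R$ is Noetherian ensures that $S$ is graded Noetherian, so $\Spec^h(S)$ is a Noetherian topological space.

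Next I would invoke the stratification of $\Der(S)$ by the canonical action of $S$ on itself. This is the graded analogue of Neeman's theorem for the derived category of a commutative Noetherian ring, stated in the BIK framework in \cite{BensonIyengarKrause08, BensonIyengarKrause11b}: the local cohomology functors $\Gamma_{\frak p}^{S\circlearrowright \Der(S)}$ associated to the homogeneous primes $\frak p \in \Spec^h(S)$ exhaust all of $\Spec^h(S)$, and each local stratum is either zero or minimal. In other words, $\Der(S)$ is BIK-stratified with respect to its own central action.

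Finally, I would feed this BIK-stratification into \cref{thm:ttstratificationuniversality}: since the acting ring coincides with $[\unit,\unit]^\bullet$, BIK-stratification promotes to genuine stratification in the sense of \cref{def:stratification}, and Balmer's comparison map yields a homeomorphism $\Spc(\Mod^c(R^{h\bT})) \cong \Spec^h(\pi_* R^{h\bT}) \cong \Spec^h(R[x_1,\ldots,x_r])$, which is exactly \eqref{eq:spctorus}. The only subtle point is verifying that the cited BIK stratification of $\Der(S)$ is available for $S$ a graded polynomial ring over an arbitrary Noetherian base (rather than over a field); this is the one place where one must either quote a suitable extension of Neeman's theorem or reduce to it by a straightforward change-of-rings argument along $R \to S$. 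Once this is in hand, the rest of the proof is essentially formal.
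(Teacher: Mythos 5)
Your proposal follows essentially the same route as the paper: replace $R^{h\bT}$ by the formal graded polynomial algebra $\pi_*R^{h\bT}\cong R[x_1,\ldots,x_r]$ via \cref{lem:formality}, invoke Benson--Iyengar--Krause's stratification result for formal dg-algebras with Noetherian cohomology, and then conclude via \cref{thm:ttstratificationuniversality}. The ``subtle point'' you flag is not a gap: the paper cites \cite[Thm.~8.1]{BensonIyengarKrause11b} together with the transfer principle \cite[Cor.~8.4]{BensonIyengarKrause11b}, and that theorem applies to any formal graded-commutative dg-algebra with Noetherian cohomology ring, with no restriction to field coefficients.
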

\begin{proof}
We employ the theory of BIK-stratification \cite{BensonIyengarKrause11b}, brielfy reviewed in \cref{ssec:recstratification} above, to prove this result; in light of \cref{thm:ttstratificationuniversality}, this suffices to show that $\Mod(R^{h\bT})$ is also stratified in the tt-sense. 

Consider the tt-category $\Mod(R^{h\bT})$ equipped with the canonical action by $\pi_*R^{h\bT}$. As an $\mathbb{E}_1$-algebra, $\pi_*R^{h\bT}$ is formal by \cref{lem:formality}; furthermore, it is Noetherian because $R$ is Noetherian. The equivalence of $\mathbb{E}_1$-algebras $R^{h\bT} \simeq \pi_*R^{h\bT}$ induces an equivalence of stable $\infty$-categories
\[
\Mod(R^{h\bT}) \simeq \Mod(\pi_*R^{h\bT}).
\]
Via the transfer principle of \cite[Cor.~8.4]{BensonIyengarKrause11b}, it suffices to show that $\Mod(\pi_*R^{h\bT})$ is stratified by the induced action of $\pi_*R^{h\bT}$. Since $\pi_*R^{h\bT}$ is a commutative dg-algebra which is formal as a dg-algebra, \cite[Thm.~8.1]{BensonIyengarKrause11b} applies to give the desired stratification result. 

It therefore remains to determine the Balmer spectrum of $\Mod(R^{h\bT})$. To this end, observe that the residue fields $\kappa(\frak p)$ of $\pi_*R^{h\bT}$ are non-trivial for all $\frak p \in \Spec^h(\pi_*R^{h\bT})$, hence the BIK-support of $\Mod(R^{h\bT})$ is all of $\Spec^h(\pi_*R^{h\bT})$. The first isomorphism of \eqref{eq:spctorus} thus follows from \cref{thm:ttstratificationuniversality}, while the second is a consequence of \eqref{eq:cohomologytorus}.
\end{proof}

\begin{Rem}
We emphasize that \cref{prop:torusstratification} works for coefficients in an arbitrary Noetherian commutative ring $R$. It is for this reason that the abstract results of \cite{DellAmbrogioStanley16}, for example, are not readily applicable.
\end{Rem}


\subsection{Elementary abelian groups}

Galois ascent for stratification allows us to pass from tori to elementary abelian groups. This step is an elaboration of Mathew's approach~\cite[Sec.~6]{mathew_torus}.

\begin{Conv}
In this section, we will work over a fixed field $k$ and consider discrete commutative algebras $A$ over $k$. As before, we write $\bT$ for a torus of rank $r$ and $E \cong (\Z/p)^{\times r}$ as an elementary abelian $p$-group of rank $r$. If the characteristic of $k$ is different from $p$, the results of this subsection hold trivially, so we may tacitly assume $\mathrm{char}(k) = p$.
\end{Conv}

\begin{Lem}\label{lem:groupcohomologybasechange}
Let $A$ be a discrete commutative $k$-algebra. For any compact Lie group $G$, the canonical maps assemble into a pushout square
\[
\xymatrix{k \ar[r] \ar[d] & k^{hG} \ar[d] \\
A \ar[r] & A^{hG}}
\]
in the category of commutative $k$-algebras. Moreover, the induced square on homotopy groups is a pushout in the category of discrete commutative rings. 
\end{Lem}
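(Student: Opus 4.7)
The plan is to reduce the statement to the claim that the natural comparison map
\[
\phi \colon A \otimes_k k^{hG} \longrightarrow A^{hG}
\]
is an equivalence of commutative $k$-algebras. Here $\phi$ is assembled from the map $A \to A^{hG}$ induced by the trivial $G$-action on $A$, together with the map $k^{hG} \to A^{hG}$ obtained by applying $(-)^{hG}$ to the unit $k \to A$. Since the pushout of $A \leftarrow k \to k^{hG}$ in commutative $k$-algebras is realized by the relative tensor product $A \otimes_k k^{hG}$, checking that $\phi$ is an equivalence proves the first claim.

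I would then verify the equivalence by passage to homotopy groups. On the left, flatness of the discrete $k$-algebra $A$ over the field $k$ gives
\[
\pi_\ast(A \otimes_k k^{hG}) \;\cong\; A \otimes_k \pi_\ast k^{hG} \;\cong\; A \otimes_k H^{-\ast}(BG;k).
\]
On the right, $\pi_n A^{hG} \cong H^{-n}(BG;A)$. The question thus reduces to the (graded) base-change identification $A \otimes_k H^\ast(BG;k) \cong H^\ast(BG;A)$.

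For this, the key input is that $BG$ admits a CW model of finite type, so that $C^\ast(BG;k)$ is degreewise finite-dimensional; for an arbitrary compact Lie group $G$ such a model is available, e.g.~by embedding $G \hookrightarrow U(n)$ and presenting $BG$ as a bundle over $BU(n)$ with compact manifold fiber $U(n)/G$, or directly via the skeletal filtration of Milnor's join construction. Granted this, $C^\ast(BG;A) \simeq C^\ast(BG;k) \otimes_k A$ as cochain complexes, and flatness of $A$ lets one commute cohomology past $-\otimes_k A$, yielding the base-change formula. I expect the only subtle point to be the finite-type verification and the bookkeeping of the multiplicative structure in the universal coefficient theorem; the rest is routine.

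For the second assertion, the pushout of $\pi_\ast A \leftarrow \pi_\ast k \to \pi_\ast k^{hG}$ in graded discrete commutative rings is precisely $A \otimes_k H^{-\ast}(BG;k)$, which has just been identified with $\pi_\ast A^{hG}$. Equivalently, once the first part is known, the second follows at once from the flatness of $A$ over $k$: no higher $\mathrm{Tor}$-terms arise in $\pi_\ast(A \otimes_k k^{hG})$, so the homotopy groups of the $\infty$-categorical pushout coincide with the ordinary pushout of homotopy groups.
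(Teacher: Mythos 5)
Your proof is correct and follows essentially the same route as the paper: both identify the pushout with the relative tensor product $A \otimes_k k^{hG}$, use flatness of $A$ over the field $k$ to commute $\pi_\ast$ past $- \otimes_k A$, and then reduce the claim to the base-change isomorphism $A \otimes_k H^\ast(BG;k) \cong H^\ast(BG;A)$, which the paper dispatches by citing the universal coefficient theorem (Spanier) while you unpack it via a finite-type CW model of $BG$. Your explicit treatment of the finite-type point is a welcome bit of extra care, but the underlying argument is the same.
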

\begin{proof}
Since $k$ is a field, there is a natural isomorphism $A \otimes_k \pi_*k^{hG} \simeq \pi_*(A \otimes_k k^{hG})$. Both parts of the statement are thus consequences of the observation that the canonical map
\[
\xymatrix{A\otimes_k H^{-*}(G;k) \cong A \otimes_k \pi_*k^{hG} \ar[r] & \pi_*A^{hG} \cong H^{-*}(G;A)}
\]
is an isomorphism. Indeed, this follows from the universal coefficient theorem for cohomology (see for example \cite[\S5.5, Thm.~11]{Spanier81}).
\end{proof}

\begin{Lem}\label{lem:elabgaloisext}
For any discrete commutative algebra $A \in \CAlg(k)$, the canonical map $A^{h\bT} \to A^{hE}$ is a faithful Galois extensions with Galois group $\bT$.
\end{Lem}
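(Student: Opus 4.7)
The plan is to reduce to the universal case $A = k$ by base change, and then invoke a known Galois extension of cochain algebras on classifying spaces.

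In the universal case $A = k$, I would use the short exact sequence of topological groups $1 \to E \to \bT \xrightarrow{p} \bT \to 1$ obtained by identifying $E$ with the $p$-torsion in $\bT$ (using $\mathrm{char}(k)=p$). Delooping exhibits $BE \to B\bT$ as a principal $\bT$-bundle, and passing to $k$-cochains produces the desired faithful $\bT$-Galois extension $k^{h\bT} \to k^{hE}$ in the sense of Rognes; this is precisely the content of \cite[Sec.~6]{mathew_torus}.

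For general $A \in \CAlg(k)$, the two pushout squares provided by \cref{lem:groupcohomologybasechange} combine to yield a canonical chain of equivalences
\[
A^{hE} \simeq A \otimes_k k^{hE} \simeq A^{h\bT} \otimes_{k^{h\bT}} k^{hE},
\]
under which the canonical map $A^{h\bT} \to A^{hE}$ is identified with the base change of $k^{h\bT} \to k^{hE}$ along $k^{h\bT} \to A^{h\bT}$. Since the $G$-Galois condition is preserved under arbitrary base change of commutative ring spectra \cite{Rognes08}, this base change produces a $\bT$-Galois extension.

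It then remains to check that faithfulness survives this base change, which I would verify by a general argument: if $R \to B$ is faithful and $R \to R'$ is any map of commutative ring spectra, then any nonzero $R'$-module $M$ has nonzero underlying $R$-module, so $M \otimes_{R'} (R' \otimes_R B) \simeq M \otimes_R B \neq 0$; thus $R' \to R' \otimes_R B$ is faithful. Applied with $R = k^{h\bT}$, $B = k^{hE}$, $R' = A^{h\bT}$, this completes the proof. The only genuine input is the universal case; the remaining steps are formal, with the only subtle point being to confirm that the $\bT$-action on $A^{hE}$ obtained by base change coincides with the canonical $\bT$-action implicit in the statement, but this is guaranteed by the naturality of the equivalences in \cref{lem:groupcohomologybasechange}.
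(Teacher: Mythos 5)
Your argument follows the same route as the paper: reduce to $A = k$ via Mathew's result, apply \cref{lem:groupcohomologybasechange} twice to realize $A^{hE}$ as the pushout $A^{h\bT} \otimes_{k^{h\bT}} k^{hE}$, and invoke Rognes' base change for Galois extensions. The only cosmetic difference is that the paper cites \cite[Lem.~7.1.1]{Rognes08} once to cover both the Galois and faithfulness conditions, whereas you verify faithfulness of the cobase-changed extension by hand; your extra remark about compatibility of the $\bT$-actions is a reasonable sanity check that the paper leaves implicit.
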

\begin{proof}
In \cite[Prop.~3.9]{mathew_torus} (see also \cite[\S9]{mathew_galois}), Mathew establishes this result for $A = k$. The general case follows from this by base-change. Indeed, there is a commutative diagram of commutative $k$-algebras
\[
\xymatrix{k \ar[r] \ar[d] & k^{h\bT} \ar[r] \ar[d] & k^{hE} \ar[d] \\
A \ar[r] & A^{h\bT} \ar[r] & A^{hE}}
\]
in which all maps are the canonical ones. By \cref{lem:groupcohomologybasechange} applied twice, the left inner square and outer rectangle are pushout squares in the category of commutative $k$-algebras. Therefore, the right inner square is a pushout as well. It follows from base-change for faithful Galois extensions, proven by Rognes in \cite[Lem.~7.1.1]{Rognes08}, that $A^{h\bT} \to A^{hE}$ is a faithful Galois extension with Galois group $\bT$, as desired.
\end{proof}

\begin{Thm}\label{thm:elabstratificationmodp}
Let $k$ be a field of characteristic $p$, let $A$ be a discrete Noetherian commutative algebra $A \in \CAlg(k)$, and $E \cong (\Z/p)^{\times r}$ an elementary abelian $p$-group of rank $r$. The tt-category $\Mod(A^{hE})$ is stratified over the spectrum
\[
\Spc(\Mod^c(A^{hE})) \cong \Spec^h(\pi_*A^{hE}).
\]
\end{Thm}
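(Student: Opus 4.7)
The plan is to combine the torus stratification already established with the Galois ascent principle for stratification. First I would apply \cref{prop:torusstratification} to the Noetherian commutative ring $R = A$ itself, which yields stratification of $\Mod(A^{h\bT})$ together with an identification
\[
\Spc(\Mod^c(A^{h\bT})) \cong \Spec^h(\pi_*A^{h\bT}) \cong \Spec^h(A[x_1,\ldots,x_r]).
\]
This provides the base of the argument.

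Next I would invoke \cref{lem:elabgaloisext} to obtain the faithful $\bT$-Galois extension $A^{h\bT} \to A^{hE}$. Since the torus $\bT$ is connected and has the homotopy type of a finite CW-complex, the hypotheses of \cref{prop:galoisascent} are satisfied. Applying it transports stratification from $\Mod(A^{h\bT})$ to $\Mod(A^{hE})$, and simultaneously produces a homeomorphism of Balmer spectra
\[
\Spc(\Mod^c(A^{hE})) \xrightarrow{\cong} \Spc(\Mod^c(A^{h\bT})).
\]
Composed with the torus case, this already yields $\Spc(\Mod^c(A^{hE})) \cong \Spec^h(\pi_*A^{h\bT})$.

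The remaining task is to upgrade this to the identification with $\Spec^h(\pi_*A^{hE})$ demanded by the statement. Here I would exploit naturality of Balmer's comparison map \eqref{eq:Balmercomparisonmap} to form the commutative square
\[
\xymatrix{
\Spc(\Mod^c(A^{hE})) \ar[r]^-{\cong} \ar[d]_{\rho_E^{\bullet}} & \Spc(\Mod^c(A^{h\bT})) \ar[d]^{\rho_\bT^{\bullet}}_-{\cong} \\
\Spec^h(\pi_*A^{hE}) \ar[r] & \Spec^h(\pi_*A^{h\bT})
}
\]
in which the right vertical map is the homeomorphism from the torus case and the top horizontal map is the Galois ascent homeomorphism. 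It then suffices to check that the bottom horizontal arrow, induced by the ring map $\pi_*A^{h\bT} \to \pi_*A^{hE}$, is a homeomorphism. Using \cref{lem:groupcohomologybasechange} to compute $\pi_*A^{hE} \cong A \otimes_k H^*(E;k)$, this reduces to an elementary observation: for odd $p$, the exterior generators in $H^*(E;k)$ are nilpotent, so modding them out yields the polynomial subring $\pi_*A^{h\bT}$; for $p = 2$, the ring map sends each degree-$2$ polynomial generator to the square of a degree-$1$ generator, giving a finite purely inseparable (Frobenius-type) extension. In both cases the induced map on homogeneous Zariski spectra is a universal homeomorphism.

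The conceptually delicate step is the passage across the Galois extension, which requires the torus's good point-set topological properties (connectedness and finite CW-type) to feed into \cref{prop:galoisascent}; everything else is either a direct invocation of the torus base case or a cohomological computation. The only mildly subtle bookkeeping is the final comparison of the two spectra, where the difference in grading conventions between torus and elementary abelian cohomology must be accounted for via nilpotents or the Frobenius, but this does not pose a genuine obstacle.
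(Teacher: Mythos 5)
Your proposal follows the same route as the paper: reduce to the torus case (\cref{prop:torusstratification}) via the faithful $\bT$-Galois extension of \cref{lem:elabgaloisext} and the Galois ascent principle of \cref{prop:galoisascent}, then identify the spectrum using \cref{lem:groupcohomologybasechange} and the fact that $\pi_*A^{h\bT}\to\pi_*A^{hE}$ induces a homeomorphism on homogeneous Zariski spectra. Your treatment of the final identification is somewhat more explicit than the paper's (which merely cites the second part of \cref{lem:groupcohomologybasechange}), spelling out the nilpotent/Frobenius dichotomy and the naturality square for $\rho^{\bullet}$, but the argument is in substance identical.
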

\begin{proof}
By \cref{lem:elabgaloisext}, the canonical map $A^{h\bT} \to A^{hE}$ is a faithful Galois extensions with Galois group $\bT$, so we can apply Galois ascent  (\cref{prop:galoisascent}). Indeed, $\Mod(A^{h\bT})$ is stratified as shown in \cref{prop:torusstratification}, hence so is $\Mod(A^{hE})$. In addition, the Balmer spectrum is then given by
\[
\Spc(\Mod^c(A^{hE})) \cong \Spc(\Mod^c(A^{h\bT})) \cong \Spec^h(\pi_*A^{h\bT}),
\]
see \eqref{eq:spctorus}. In fact, the map $\pi_*A^{h\bT} \to \pi_*A^{hE}$ induces a homeomorphism on homogeneous Zariski spectra, using the second part of \cref{lem:groupcohomologybasechange}. This implies the claim. 
\end{proof}


\begin{Rem}\label{rem:whygalois}
The case of $A=k$ of \cref{thm:elabstratificationmodp} is the technical heart of the paper \cite{BensonIyengarKrause11a} of Benson, Iyengar, and Krause. There, the authors use an elaboration of the Bernstein--Gelfand--Gelfand correspondence, modelled on a strategy by Avramov, Buchweitz, Iyengar, and Miller \cite{abim_bgg}, to prove their result. It is conceivable that this strategy extends to cover the case of (not necessarily regular) Noetherian $k$-algebras as well. 
\end{Rem}

\section{The passage through equivariant homotopy theory}\label{sec:eqperm}

One of the main observations that motivate this paper is that stratification for equivariant modules over equivariant ring spectra often reduces to non-equivariant stratification problems. Of particular relevance for us are the Borel-completions of commutative rings equipped with trivial $G$-action, which turn out to be amenable to this reduction technique. Combined with the fact that the categories $\Rep(G,R)$ arise via Borel completion from equivariant stable homotopy theory thus allows us to connect representation theory to non-equivariant stable homotopy theory. 

\begin{Conv}\label{conv:equivariant}
Let $\Sp_G$ be the symmetric monoidal stable $\infty$-category of genuine equivariant $G$-spectra for $G$ a finite group, see for example \cite[Part 2]{MathewNaumannNoel17} or the classical \cite{LewisMaySteinbergerMcClure86}. Denote the inflation functor by $\infl\colon \Sp \to \Sp_G$; this is the tt-functor which sends a spectrum $M$ to a $G$-spectrum with trivial $G$-action. If $R$ is a commutative ring spectrum, we write $\underline{R}_G = F(EG_+,\infl R) \in \Sp_G$ for the $G$-Borel equivariant commutative ring spectrum associated to $R$.
\end{Conv}

\subsection{Borel ring spectra}

Let $R$ be a Noetherian commutative ring. The category $\Mod_{\Sp_G}(\underline{R}_G)$ provides a gateway between the results of the previous section and the representation-theoretic categories considered below. It is an equivariant homotopy-theoretic incarnation of the category $D_{\mathrm{perm}}(G;R)$ of permutation modules studied by Balmer and Gallauer \cite{balmergallauer_permmodulesres,balmergallauer_permutationmodulescohomsing}. Indeed, first observe that $\Mod_{\Sp_G}(\underline{R}_G)$ is rigidly-compactly generated by the collection $\{\underline{R}_G \otimes G/H_+\}$ for $H$ running through the subgroups of $G$. Secondly, as proven in \cite[Cor.~6.21]{MathewNaumannNoel17}, there is a fully faithful tt-functor
\[
\xymatrix{\Mod_{\Sp_G}^c(\underline{R}_G) \ar[r] & \Fun(BG,\cat D(R)^c)}
\]
which sends $\underline{R}_G \otimes G/H_+$ to the permutation representation $R[G/H]$. This functor exhibits $\Mod_{\Sp_G}(\underline{R}_G)$ as the localizing subcategory of the derived $\infty$-category $\Ind\Fun(BG,\cat D(R)^c)$ generated by permutation modules. 

Let $k$ be a field of characteristic $p>0$ and consider a commutative algebra $A$ over $k$. Note that we do not impose any regularity assumptions on $A$.

\begin{Prop}\label{prop:monogenicborel}
Suppose $A$ is a discrete commutative algebra over a field $k$ of characteristic $p>0$ and let $P$ be a finite $p$-group. There is a canonical equivalence
\[
\xymatrix{\Mod_{\Sp_P}(\underline{A}_P) \ar[r]^-{\sim} & \Mod_{\Sp}(A^{hP})}
\]
of tt-categories. 
\end{Prop}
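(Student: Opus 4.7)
The plan is to establish the equivalence via Morita theory in higher algebra, with the critical input being the \emph{unipotence} of the Borel ring spectrum $\underline{A}_P \in \CAlg(\Sp_P)$. First I would identify the endomorphism $\bbE_\infty$-ring of the unit,
\[
\End_{\Mod_{\Sp_P}(\underline{A}_P)}(\underline{A}_P) \simeq \underline{A}_P^P \simeq A^{hP},
\]
which is immediate from the definitions $\underline{A}_P = F(EP_+,\infl A)$ of the Borel completion and of homotopy fixed points. By the Schwede--Shipley recognition theorem (\cite[Thm.~7.1.2.1]{HALurie}), this identification produces a canonical colimit-preserving symmetric monoidal functor
\[
\Phi := \Map_{\underline{A}_P}(\underline{A}_P,-)\colon \Mod_{\Sp_P}(\underline{A}_P) \too \Mod_{\Sp}(A^{hP})
\]
that is fully faithful on the unit by construction. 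The functor $\Phi$ is therefore an equivalence if and only if $\underline{A}_P$ generates its module category as a localizing subcategory.

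The remaining task is this generation property. The discussion preceding the proposition exhibits $\Mod_{\Sp_P}(\underline{A}_P)$ as rigidly-compactly generated by the orbits $\{\underline{A}_P \otimes P/H_+\}_{H \leq P}$, so it suffices to show each such orbit belongs to $\Loco{\underline{A}_P}$. This is where both hypotheses enter simultaneously: since $A$ is a $k$-algebra with $\mathrm{char}(k) = p$, the $\bbE_\infty$-unit of $\underline{A}_P$ factors through a canonical morphism $\underline{\HFp}_P \to \underline{A}_P$ in $\CAlg(\Sp_P)$, thereby reducing the task to the unipotence of $\underline{\HFp}_P$ for a finite $p$-group $P$. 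The latter is a standard consequence of the descendability/nilpotence framework for equivariant module categories developed by Mathew--Naumann--Noel in \cite{MathewNaumannNoel17}; alternatively, one may argue directly by induction on $|P|$ using a central order-$p$ subgroup $C \leq P$ and the isotropy separation cofiber sequence for the family of subgroups not containing $C$, with the non-Borel summand absorbed after smashing with $\underline{\HFp}_P$ thanks to the characteristic assumption.

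The principal obstacle is thus the unipotence step. All remaining ingredients---the identification of the endomorphism ring, the symmetric monoidal enhancement of $\Phi$, and the extraction of the equivalence from generation via Schwede--Shipley---are purely formal once this unipotence is in hand.
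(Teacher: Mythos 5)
Your proof is correct and follows essentially the same strategy as the paper's: identify $\End(\underline{A}_P) \simeq A^{hP}$, invoke Schwede--Shipley/Lurie's recognition theorem, and reduce the remaining generation statement to the known unipotence of Borel ring spectra over fields of characteristic $p$ for $p$-groups. The only (cosmetic) difference is that you factor the unit through $\underline{\HFp}_P$ and base-change upward, whereas the paper runs the conservativity argument directly along $\underline{k}_P \to \underline{A}_P$; both hinge on exactly the same unipotence input.
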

\begin{proof}
The result follows from derived Morita theory proven by Schwede and Shipley~\cite{SS03} and Lurie~\cite[Thm.~7.1.2.1]{HALurie}, as soon as we have verified the following two claims:
    \begin{enumerate}
        \item $\Mod_{\Sp_P}(\underline{A}_P)$ is compactly generated by its unit $\underline{A}_P$;
        \item the derived endomorphisms of $\underline{A}_P$ are equivalent to $A^{hP}$, as commutative ring spectra. 
    \end{enumerate}
For the first claim, note that the unit homomorphism $\eta\colon k \to A$ induces a tt-functor
\[
\xymatrix{\eta^*\colon \Mod_{\Sp_P}(\underline{k}_P) \ar[r] & \Mod_{\Sp_P}(\underline{A}_P),}
\]
whose forgetful right adjoint is conservative. In particular, $\eta^*$ preserves compact generators. Since $P$ is a finite $p$-group, $\Mod_{\Sp_P}(\underline{k}_P) \simeq \Mod_{\Sp}(k^{tP})$ is compactly generated by its unit. Therefore, the same is true for $\Mod_{\Sp_P}(\underline{A}_P)$.

In order to see (2), we compute
\[
\End_{\Mod_{\Sp_P}(\underline{A}_P)}(\underline{A}_P) \simeq \Hom_{\Sp_P}(S_P^0,\underline{A}_P) \simeq \Hom_{\Sp_P}(EP_+,\infl A) \simeq A^{hP}.
\]
As in \cite[\S2.2]{mathew_torus}, this equivalence is compatible with the commutative ring spectra structures on both sides, which implies the claim.
\end{proof}

\begin{Cor}\label{cor:monogenicgentate}
With notation as in \cref{prop:monogenicborel}, there is a symmetric monoidal equivalence
\[
\xymatrix{\Mod_{\Sp_P}(\underline{A}_P \otimes \widetilde{EP}) \ar[r]^-{\sim} & \Mod_{\Sp}(A^{tP}),}
\]
where $\widetilde{EP}$ denotes the cofiber of the canonical map $EP_+ \to S^0$.
\end{Cor}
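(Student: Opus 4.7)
The plan is to reduce the statement to \cref{prop:monogenicborel}. Since the equivalence $\Phi\colon \Mod_{\Sp_P}(\underline{A}_P) \xrightarrow{\sim} \Mod_{\Sp}(A^{hP})$ of \cref{prop:monogenicborel} is symmetric monoidal, for any commutative algebra object $B \in \CAlg(\Mod_{\Sp_P}(\underline{A}_P))$ it induces a symmetric monoidal equivalence between the respective categories of modules,
\[
\Mod_{\Mod_{\Sp_P}(\underline{A}_P)}(B) \simeq \Mod_{\Mod_{\Sp}(A^{hP})}(\Phi(B)).
\]
Specializing to $B = \underline{A}_P \otimes \widetilde{EP}$ reduces the corollary to identifying $\Phi(B)$ with $A^{tP}$ as a commutative algebra over $A^{hP}$.

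For the identification, I would use that $\Phi$ is implemented, on underlying objects, by $\Phi(M) \simeq \Hom_{\Sp_P}(S^0_P, M) \simeq M^P$, as in the proof of \cref{prop:monogenicborel}. Then
\[
\Phi(\underline{A}_P \otimes \widetilde{EP}) \simeq (\underline{A}_P \otimes \widetilde{EP})^P \simeq A^{tP}
\]
is precisely the standard formula for the Tate construction. The compatibility with the $\mathbb{E}_\infty$-structure follows exactly as in \cref{prop:monogenicborel}, and the unit map $\Phi(\underline{A}_P) \to \Phi(\underline{A}_P \otimes \widetilde{EP})$ recovers the canonical map $A^{hP} \to A^{tP}$.

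The main potential obstacle is to verify that the symmetric monoidal equivalence of \cref{prop:monogenicborel} is sufficiently functorial to induce equivalences on commutative algebra objects and their module categories; in the $\infty$-categorical framework of \cite{HALurie} this is automatic. Should a more direct argument be preferred, one can instead mirror the proof of \cref{prop:monogenicborel}: establish that $\Mod_{\Sp_P}(\underline{A}_P \otimes \widetilde{EP})$ is compactly generated by its unit, via base change along $\eta\colon k \to A$ (noting that since $\widetilde{EP}$ is a commutative idempotent in $\Sp_P$, smashing with it produces a smashing localization of $\Mod_{\Sp_P}(\underline{k}_P)$ which inherits compact monogenicity), and then compute the endomorphism ring spectrum of the unit as $(\underline{A}_P \otimes \widetilde{EP})^P \simeq A^{tP}$ before invoking derived Morita theory directly.
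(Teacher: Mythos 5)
Your argument is correct, and it takes a genuinely different (though closely related) route from the paper's. The paper works on the level of Verdier quotients: it observes that the equivalence of \cref{prop:monogenicborel} carries $\underline{A}_P \otimes P_+$ to the $A^{hP}$-module $A$, so the finite localizations of the two sides away from $\Loco{\underline{A}_P \otimes P_+}$ and $\Loco{A}$ correspond, and then identifies these localizations with $\Mod_{\Sp_P}(\underline{A}_P \otimes \widetilde{EP})$ and $\Mod_{\Sp}(A^{tP})$. You instead invoke the general principle that a symmetric monoidal equivalence $\Phi$ induces, for each commutative algebra object $B$, an equivalence $\Mod_{\cat C}(B) \simeq \Mod_{\cat D}(\Phi(B))$, and then compute $\Phi(\underline{A}_P \otimes \widetilde{EP}) = (\underline{A}_P \otimes \widetilde{EP})^P \simeq A^{tP}$ directly from the identification of $\Phi$ with genuine $P$-fixed points and the standard description of the Tate construction as geometric fixed points of the Borel completion. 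The two are really two ways of packaging the same observation---$-\otimes\widetilde{EP}$ is a smashing localization, and smashing localizations of a monoidal category can be presented as module categories over the localized unit---but your version short-circuits the separate identification of the Verdier quotient $\Mod_{\Sp}(A^{hP})/\Loco{A}$ with $\Mod_{\Sp}(A^{tP})$, at the cost of needing the symmetric monoidality of $\Phi$ to be compatible with passage to $\CAlg$ and modules thereover (automatic in the $\infty$-categorical setting, as you note). The paper's formulation has the separate virtue of exhibiting exactly the commutative square that is then reused in \cref{prop:eqreptranslation}. Your fallback argument, running Morita theory directly on $\Mod_{\Sp_P}(\underline{A}_P\otimes\widetilde{EP})$ and computing $\End(\underline{A}_P\otimes\widetilde{EP}) \simeq A^{tP}$, is precisely the content of the remark that follows \cref{cor:monogenicgentate} in the paper.
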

\begin{proof}
Under the equivalence of \cref{prop:monogenicborel}, the object $\underline{A}_P \otimes P_+$ maps to the $A^{hP}$-module spectrum $(\underline{A}_P\otimes P_+)^{hP} \simeq A$, with action induced by the augmentation. The corresponding finite localizations away from the localizing ideals generated by $\underline{A}_P \otimes P_+$ and $A$, respectively, fit into a commutative square
\[
\xymatrix{\Mod_{\Sp_P}(\underline{A}_P) \ar[r]^-{\sim} \ar[d]_{\theta^*} & \Mod_{\Sp}(A^{hP}) \ar[d]^{\theta^*} \\
\Mod_{\Sp_P}(\underline{A}_P \otimes \widetilde{EP}) \ar@{-->}[r]_-{\sim} & \Mod_{\Sp}(A^{tP}).}
\]
The induced bottom horizontal equivalence is the desired equivalence.
\end{proof}

\begin{Rem}
Alternatively, one can also run the proof \cref{prop:monogenicborel} directly for $\Mod_{\Sp_P}(\underline{A}_P \otimes \widetilde{EP})$, using that the derived endomorphisms are given by
\[
\End_{\Mod_{\Sp_P}(\underline{A}_P \otimes \widetilde{EP})}(\underline{A}_P \otimes \widetilde{EP}) \simeq A^{tP},
\]
as commutative ring spectra.
\end{Rem}

\subsection{Stratification for regular coefficients}

The goal of this subsection is to establish stratification for the tt-categories $\Mod_{\Sp_{E}}(\underline{R}_E)$ for elementary abelian $p$-groups $E$. The proof naturally splits into two cases: either $p$ is $0$ in $R$ or it is not. We will consider these two cases separately, beginning with the case that $p$ vanishes in $R$, where in fact we can say something more general. The other case then follows from the modular one through a lifting argument. 

\subsubsection*{Characteristic $p>0$}

\begin{Prop}\label{prop:elabeqstratificationcharp}
Let $k$ be a field of characteristic $p$, let $A$ be a discrete Noetherian commutative algebra $A \in \CAlg(k)$. If $E$ is an elementary abelian $p$-group, then the tt-category $\Mod_{\Sp_{E}}(\underline{A}_E)$ is stratified over its spectrum
\[
\Spc(\Mod_{\Sp_{E}}^c(\underline{A}_E)) \cong \Spec^h(\pi_*A^{hE}).
\]
\end{Prop}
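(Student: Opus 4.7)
The plan is to combine the unipotence/monogenicity statement of \cref{prop:monogenicborel} with the non-equivariant stratification result of \cref{thm:elabstratificationmodp}; these two inputs together should essentially give the proposition on a silver platter.

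First I would observe that an elementary abelian $p$-group $E \cong (\Z/p)^{\times r}$ is, in particular, a finite $p$-group, and that $A \in \CAlg(k)$ with $k$ a field of characteristic $p$ satisfies the hypotheses of \cref{prop:monogenicborel}. Applying that proposition produces a symmetric monoidal equivalence
\[
\xymatrix{\Mod_{\Sp_{E}}(\underline{A}_E) \ar[r]^-{\sim} & \Mod_{\Sp}(A^{hE})}
\]
of tt-categories, identifying the unit $\underline{A}_E$ on the left with the unit $A^{hE}$ on the right. In particular, this is an equivalence of rigidly-compactly generated tt-categories, so stratification and Balmer spectra are preserved through it.

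Next I would invoke \cref{thm:elabstratificationmodp}, which precisely states that $\Mod(A^{hE})$ is stratified (in the sense of \cref{def:stratification}) and that its Balmer spectrum is $\Spec^h(\pi_*A^{hE})$. Transporting the stratification across the equivalence above yields the stratification of $\Mod_{\Sp_{E}}(\underline{A}_E)$, together with the identification
\[
\Spc(\Mod_{\Sp_{E}}^c(\underline{A}_E)) \cong \Spc(\Mod^c(A^{hE})) \cong \Spec^h(\pi_*A^{hE}),
\]
which is the asserted description of the spectrum.

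There is essentially no remaining obstacle: all the hard work has been done in \cref{prop:torusstratification} (torus formality plus BIK-stratification of formal dg-algebras), in \cref{thm:elabstratificationmodp} (Galois ascent from tori to elementary abelians via \cref{lem:elabgaloisext}), and in \cref{prop:monogenicborel} (monogenicity of Borel $p$-group modules via conservative base-change from $\underline{k}_P$ and the classical identification $\Mod_{\Sp_P}(\underline{k}_P)\simeq \Mod(k^{hP})$). The only thing to double-check is that \cref{prop:monogenicborel} is strong enough to preserve the data needed for stratification in the sense of \cite{bhs1}; but since it is an equivalence of symmetric monoidal stable $\infty$-categories sending compact generators to compact generators, the Balmer--Favi support functions match and the stratification property transfers formally. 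This gives the proposition.
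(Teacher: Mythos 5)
Your proposal is exactly the paper's proof: invoke \cref{prop:monogenicborel} to obtain the tt-equivalence $\Mod_{\Sp_E}(\underline{A}_E)\simeq\Mod(A^{hE})$ and then apply \cref{thm:elabstratificationmodp}. The additional remarks about transporting stratification across a symmetric monoidal equivalence are correct and simply make explicit what the paper leaves implicit.
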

\begin{proof}
\Cref{prop:monogenicborel} provides a symmetric monoidal equivalence 
\[
\xymatrix{\Mod_{\Sp_E}(\underline{A}_E) \ar[r]^-{\sim} & \Mod_{\Sp}(A^{hE})},
\]
so the claim follows from \cref{thm:elabstratificationmodp}.
\end{proof}

\begin{Rem}
Observe that we do not impose any regularity assumptions on $A$ in this proposition. In particular, for elementary abelian $p$-groups and characteristic $p$ coefficients $A$, we obtain a partial generalization of Lau's computation \cite{lau_spcdmstacks} of the Balmer spectrum of $\Spc(\Mod_{\Sp_{E}}^c(\underline{A}_E))$ for regular $A$.
\end{Rem}

\subsubsection*{The general case}

\begin{Lem}\label{lem:injectivity}
Let $R$ be a regular commutative ring, $p$ a prime number, and $E$ an elementary abelian $p$-group of rank $r \ge 0$. The reduction map $q\colon R \to R/p$ induces an injective map 
\[
\xymatrix{\Spc(q^*)\colon \Spc(\Mod_{\Sp_{E}}^c(\underline{R/p}_E)) \ar[r] & \Spc(\Mod_{\Sp_{E}}^c(\underline{R}_E))}
\]
on Balmer spectra.
\end{Lem}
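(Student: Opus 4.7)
The plan is to reduce injectivity of $\Spc(q^*)$ to injectivity of an induced map on homogeneous Zariski spectra of graded endomorphism rings, and then to verify the latter via a radicial extension argument.

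First, I set $\cat S := \Mod_{\Sp_E}(\underline{R}_E)$ and $\cat T := \Mod_{\Sp_E}(\underline{R/p}_E)$, and identify $q^*$ as base change along the induced map $\underline{R}_E \to \underline{R/p}_E$ of commutative ring spectra in $\Sp_E$. The Balmer comparison maps into the homogeneous Zariski spectra of the endomorphism rings of the units $\pi_{-*}R^{hE} \cong H^*(E;R)$ and $\pi_{-*}(R/p)^{hE} \cong H^*(E;R/p)$ then assemble into a natural commutative square
\[
\xymatrix{
\Spc(\cat T^c) \ar[r]^-{\rho_{\cat T}} \ar[d]_-{\Spc(q^*)} & \Spec^h(H^*(E;R/p)) \ar[d]^-{g} \\
\Spc(\cat S^c) \ar[r]_-{\rho_{\cat S}} & \Spec^h(H^*(E;R)),
}
\]
with $g$ induced by the ring homomorphism $H^*(E;R) \to H^*(E;R/p)$.

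Since $R$ is regular it is Noetherian, so $R/p$ is a Noetherian discrete $\mathbb{F}_p$-algebra. Applying \cref{prop:elabeqstratificationcharp} together with \cref{thm:ttstratificationuniversality} shows that $\rho_{\cat T}$ is a homeomorphism, and so injectivity of $\Spc(q^*)$ reduces to injectivity of $g$.

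To establish that $g$ is injective, I would factor the ring homomorphism as $H^*(E;R) \twoheadrightarrow H^*(E;R)/p \hookrightarrow H^*(E;R/p)$, where the second map is the inclusion identified by the Bockstein short exact sequence. The surjection induces a closed immersion on homogeneous Zariski spectra. For the inclusion, localizing $R$ at a prime reduces to the case in which $R$ is regular local, so that $p$ is either a unit (and then $R/p = 0$, making the claim vacuous) or a nonzerodivisor. In the latter case, $R$ is $p$-torsion-free, and the universal coefficient theorem gives $H^*(E;R) \cong H^*(E;\mathbb{Z}) \otimes_{\mathbb{Z}} R$ while $H^*(E;R/p) \cong H^*(E;\mathbb{F}_p) \otimes_{\mathbb{F}_p} R/p$. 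Consequently, after base change to $R/p$, the inclusion reduces to the classical radicial extension $H^*(E;\mathbb{Z})/p \hookrightarrow H^*(E;\mathbb{F}_p)$, where the extra generators are either nilpotent (the exterior classes, for $p$ odd) or satisfy a Frobenius relation $x^p \in H^*(E;\mathbb{Z})/p$ (for $p = 2$). Radicial inclusions of graded commutative rings induce bijections on homogeneous Zariski spectra, so $g$ is injective. The main obstacle will be to verify the radicial structure uniformly in the rank of $E$ and to handle the localization-to-globalization step carefully, though both ultimately rest on Quillen's explicit description of $H^*(E;\mathbb{F}_p)$.
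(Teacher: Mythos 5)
Your reduction of the injectivity of $\Spc(q^*)$ to injectivity of the induced map $g\colon \Spec^h(H^*(E;R/p)) \to \Spec^h(H^*(E;R))$ via naturality of Balmer's comparison map $\rho^{\bullet}$ matches the paper, including the crucial input that $\rho^{\bullet}$ is a homeomorphism on the $\underline{R/p}_E$-side by \cref{prop:elabeqstratificationcharp} and \cref{thm:ttstratificationuniversality}. Where you diverge is in establishing that $g$ is injective. The paper embeds $E$ into a torus $\bT$ of the same rank and uses a commutative square of cohomology rings: on the torus side, $H^*(\bT;R) \to H^*(\bT;R/p)$ is the evident projection of polynomial rings $R[y_1,\dots,y_r]\to R/p[y_1,\dots,y_r]$, hence injective on $\Spec^h$, while $H^*(\bT;R/p)\to H^*(E;R/p)$ is an $F$-isomorphism base-changed from Quillen's $H^*(\bT;\F_p)\to H^*(E;\F_p)$, hence a homeomorphism on $\Spec^h$; a diagram chase then gives injectivity of $g$. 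You instead factor the ring map directly as $H^*(E;R)\twoheadrightarrow H^*(E;R)/p\hookrightarrow H^*(E;R/p)$, using the Bockstein (after localizing so that $p$ is a nonzerodivisor), and argue that the inclusion is radicial via the structure of $H^*(E;\Z)/p\hookrightarrow H^*(E;\F_p)$. Both strategies succeed. Your route is slightly more self-contained in that it avoids Lau's theorem (which the paper cites to identify the comparison map on the $\underline{R}_E$-side as a homeomorphism, though only injectivity of $g$ and of $\rho^{\bullet}$ for $\underline{R/p}_E$ is actually required for the conclusion). The trade-off is that you must verify the radicial property explicitly, which requires the Quillen-type structure of $H^*(E;\F_p)$: for $p$ odd the extra exterior generators are nilpotent, and for $p=2$ the degree-one classes $x_i$ satisfy $x_i^2 = \beta(x_i)\in H^*(E;\Z)/2$, after which one must pass to reduced rings and invoke the Frobenius. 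The paper's use of the torus $F$-isomorphism packages this cleanly as a single well-known input. Your localization-to-globalization step is fine: two primes of $H^*(E;R/p)$ with the same image in $\Spec^h(H^*(E;R))$ contract to a common prime $\mathfrak p$ of $R$ containing $p$, and group cohomology commutes with the flat base change $R\to R_{\mathfrak p}$, so one may indeed reduce to $R$ regular local.
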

\begin{proof}
Let $\bT = (S^{1})^{\times r}$ be a torus of rank $r$ and view $E \subset \bT$ via the embedding of the $p$-th roots of unity. By naturality, the reduction map $q\colon R\to R/p$ fits into a commutative square of graded commutative cohomology rings:
\begin{equation}\label{eq:lem:injectivity1}
\vcenter{
\xymatrix{H^*(E;R) \ar[d] & H^*(\bT;R) \ar[l] \ar[d] \\
H^*(E;R/p) & H^*(\bT;R/p). \ar[l] }
}
\end{equation}
First, we claim that the bottom horizontal map is an $F$-isomorphism. Indeed, the proof of \cref{lem:groupcohomologybasechange} shows that $H^*(\bT;R/p) \to H^*(E;R/p)$ identifies with
\[
H^*(\bT;\F_p) \otimes_{\F_p} R/p   \to H^*(E;\F_p) \otimes_{\F_p} R/p,
\]
base-changed from the $F$-isomorphism $H^*(\bT;\F_p) \to H^*(E;\F_p)$. This implies the claim. Second, the right vertical map in \eqref{eq:lem:injectivity1} may be identified with the projection
\[
R[y_1,\ldots,y_r] \to R/p[y_1,\ldots,y_r]
\]
with $|y_i| = -2$ for all $i$. Applying homogeneous Zariski spectra to the square \eqref{eq:lem:injectivity1} thus yields a commutative diagam
\begin{equation}\label{eq:lem:injectivity2}
    \xymatrix{\Spec^h(H^*(E;R)) \ar[r] & \Spec^h(H^*(\bT;R)) \\
    \Spec^h(H^*(E;R/p)) \ar[r]_-{\cong} \ar[u]^-{\Spec^h(H^*(q))} & \Spec^h(H^*(\bT;R/p)), \ar@{^{(}->}[u]}
\end{equation}
in which the composite factoring through the right bottom term is injective. It follows that $\Spec^h(H^*(q))$ is injective as well.\footnote{Alternatively, this statement can also be verified by direct computation.}

Let $q^*\colon \Mod_{\Sp_{E}}^c(\underline{R}_E) \to \Mod_{\Sp_{E}}^c(\underline{R/p}_E)$ denote base-change along $q$. Naturality of Balmer's comparison map \eqref{eq:Balmercomparisonmap} then provides a commutative square of spectra
\[
\xymatrixcolsep{5pc}{
\xymatrix{\Spc(\Mod_{\Sp_{E}}^c(\underline{R/p}_E)) \ar[r]^-{\Spc(q^*)} \ar[d]_{\rho^{\bullet}}^{\cong} & \Spc(\Mod_{\Sp_{E}}^c(\underline{R}_E)) \ar[d]_{\cong}^{\rho^{\bullet}} \\
\Spec^h(H^*(E;R/p)) \ar@{^{(}->}[r]_{\Spec^h(H^*(q))} & \Spec^h(H^*(E;R)).}}
\]
Note that the vertical comparison maps are homeomorphisms by Lau's theorem and \cref{thm:elabstratificationmodp}, respectively. It follows that $\Spc(q^*)$ is injective, as desired.
\end{proof}

\begin{Thm}\label{thm:elabeqstratification}
Let $R$ be a regular commutative ring. If $E$ is an elementary abelian $p$-group, then the tt-category $\Mod_{\Sp_{E}}(\underline{R}_E)$ is stratified over its spectrum
\[
\Spc(\Mod_{\Sp_{E}}^c(\underline{R}_E)) \cong \Spec^h(\pi_*R^{hE}).
\]
\end{Thm}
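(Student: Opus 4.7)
The spectrum identification $\Spc(\Mod_{\Sp_{E}}^c(\underline{R}_E)) \cong \Spec^h(\pi_* R^{hE})$ is already provided by Lau's theorem \cite{lau_spcdmstacks}, so I focus on the stratification claim. The strategy is modular lifting from characteristic $p$: since $|E| = p^r$, the graded ring $H^{>0}(E;R)$ consists entirely of $p$-torsion elements, so every homogeneous prime of $H^*(E;R)$ either contains $p$ or else contains the entire irrelevant ideal. The latter primes correspond bijectively to $\Spec(R[1/p])$ in degree zero. I would handle these two classes of primes via two complementary descent techniques.

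For homogeneous primes containing $p$, I would apply \cref{prop:ttdescent}(2) to the base-change tt-functor $q^*\colon\Mod_{\Sp_{E}}(\underline{R}_E) \to \Mod_{\Sp_{E}}(\underline{R/p}_E)$ associated to the reduction $q\colon R \to R/p$. The algebra $\underline{R/p}_E$ is compact because $R$ is regular Noetherian (so $R/p$ admits a finite free resolution, making $\underline{R/p}_E = \mathrm{cofib}(p\colon \underline{R}_E \to \underline{R}_E)$ compact); the induced map on Balmer spectra is injective by \cref{lem:injectivity}; and the target is stratified by \cref{prop:elabeqstratificationcharp} applied to the discrete Noetherian $\mathbb{F}_p$-algebra $R/p$. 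The conclusion of \cref{prop:ttdescent}(2) then yields minimality of $\Gamma_{\frakp} \Mod_{\Sp_{E}}(\underline{R}_E)$ for every $\frakp$ in the image of $\Spc(q^*)$, which is exactly the set of primes containing $p$.

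For primes not containing $p$, I would apply the Zariski descent of \cref{prop:ttdescent}(1) along the finite localization $\underline{R}_E \to \underline{R[1/p]}_E$, whose Thomason complement is $V(p) = \supp(\underline{R/p}_E)$. Since $|E|$ is invertible in $R[1/p]$, the Tate spectrum $R[1/p]^{tE}$ vanishes, and an adaptation of the monogenicity argument in the proof of \cref{prop:monogenicborel} identifies $\Mod_{\Sp_{E}}(\underline{R[1/p]}_E) \simeq \Mod(R[1/p]^{hE}) \simeq \Mod(R[1/p])$ as symmetric monoidal tt-categories. The latter is stratified by classical commutative algebra, since $R[1/p]$ is a regular Noetherian ring. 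Assembling both cases yields minimality at every point of the Balmer spectrum, which gives the desired stratification.

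The main obstacle is the monogenicity identification in the $p$-invertible case, which extends \cref{prop:monogenicborel} from the regime where $E$ is a $p$-group acting on a characteristic-$p$ algebra to the complementary regime where $|E|$ is a unit in the coefficients. Both regimes are ultimately driven by the degeneracy of the Tate construction $(-)^{tE}$, but verifying this carefully, together with checking that the resulting equivalence is symmetric monoidal and correctly identifies endomorphisms of the unit with $R[1/p]^{hE}$, requires some care.
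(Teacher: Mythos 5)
Your proposal follows the paper's proof almost step for step: split $\Spc(\Mod_{\Sp_E}^c(\underline{R}_E))$ along $V(p)$, treat the modular half via \cref{prop:ttdescent}(2) applied to $q^*$ (using \cref{lem:injectivity} for injectivity of $\Spc(q^*)$ and \cref{prop:elabeqstratificationcharp} for stratification of the target), and treat the $p$-inverted half via Zariski descent \cref{prop:ttdescent}(1) along $f^*$. The only thing you omit is the preliminary reduction of $R$ to a finite product of regular integral domains, which the paper uses to dispose of the case $p=0$ separately (via \cref{prop:elabeqstratificationcharp}) before assuming $p$ is a nonzerodivisor.

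The step you flag as ``the main obstacle'' is, however, a genuine gap, and the route you sketch through it would not work: $\Mod_{\Sp_{E}}(\underline{R[1/p]}_E)$ is \emph{not} monogenic for $E\neq 1$. Because $|E|$ is invertible in $R[1/p]$, Maschke's theorem shows that the permutation modules already generate all of $\Fun(BE,\Mod(R[1/p]))$, so the generation-by-permutation-modules mechanism of \cref{prop:eqreptranslation} identifies $\Mod_{\Sp_{E}}(\underline{R[1/p]}_E)\simeq \Rep(E,R[1/p])$. Concretely, for $E=\Z/2$ and $R[1/p]=\Z[1/2]$ the sign representation $\sigma$ satisfies $\Hom(\unit,\sigma)\simeq\sigma^{h\Z/2}=0$ while $\sigma\neq 0$, so the unit is not a generator, and $\Rep(E,R[1/p])\not\simeq\Mod(R[1/p])$ (indeed $K_0$ already differs). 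Contrary to your intuition, the vanishing of $R[1/p]^{tE}$ is precisely what \emph{prevents} the unit from detecting the induced pieces here; the two regimes in which the Tate construction degenerates behave oppositely on the question of monogenicity, and \cref{prop:monogenicborel} as stated applies only to characteristic-$p$ coefficients and $p$-groups. I note that the paper's own invocation of \cref{prop:monogenicborel} at this point is equally out of scope, so you have in fact put your finger on the most delicate step of the printed proof. The conclusion of the theorem is still correct and the argument can be repaired: $A=R[1/p][E]$ is a compact, faithful, separable commutative algebra of finite degree in $\Rep(E,R[1/p])$ (with separability idempotent $\tfrac{1}{|E|}\sum_{g} g\otimes g^{-1}$), whose module category $\Mod_{\Rep(E,R[1/p])}(A)\simeq\Mod(R[1/p])$ is stratified by Neeman, so that finite \'etale descent (\cref{prop:ttdescent}(3)) yields minimality of $\Gamma_{\frak p}\Rep(E,R[1/p])$ for all $\frak p$, after which Zariski descent along $f^*$ proceeds as you intended.
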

\begin{proof}
Any regular commutative ring decomposes as a finite product of regular integral domains. Since the formation of the categories $\Mod_{\Sp_G}(\underline{R}_G)$ commutes with finite products, we can reduce to $R$ itself being a regular domain. If $p$ is zero in $R$, then the result is a special case of \cref{prop:elabeqstratificationcharp}, so it remains to consider the case that $p \in R$ is regular.

In order to prepare for the proof, we begin with the construction of two tt-functors, $q^*$ and $f^*$, with the following two properties:
    \begin{enumerate}
        \item on Balmer spectra, the functors $q^*$ and $f^*$ induce a jointly bijective map;
        \item the target categories of $q^*$ and $f^*$ are stratified. 
    \end{enumerate}
We will then argue by descent, appealing to the techniques from \cref{ssec:ttascentanddescent}.    

The quotient map $R \to R/p$ induces a morphism $q\colon \underline{R}_E \to \underline{R/p}_E$ of commutative $E$-equivariant ring spectra, and hence a colimit preserving tt-functor
\[
\xymatrix{q^*\colon\Mod_{\Sp_{E}}(\underline{R}_E) \ar[r] & \Mod_{\Sp_{E}}(\underline{R/p}_E).}
\]
Its forgetful right adjoint $q_*$ is conservative and preserves compact objects: Indeed, $\Mod_{\Sp_{E}}(\underline{R/p}_E)$ is monogenic by \cref{prop:monogenicborel} and $\underline{R/p}_E \simeq \cof(\underline{R}_E \xrightarrow{p} \underline{R}_E)$, viewed as an $\underline{R}_E$-module, is compact. This suffices to apply \cite[Thm.~1.7]{Balmer18}, which shows that 
\begin{equation}\label{eq:suppofRmodp}
    \supp(\underline{R/p}_E) = \im(\varphi);
\end{equation}
here, $\varphi\colon \Spc(\Mod_{\Sp_{E}}^c(\underline{R/p}_E)) \to \Spc(\Mod_{\Sp_{E}}^c(\underline{R}_E))$ is the map on spectra induced by $q^*$. By \cref{lem:injectivity}, $\varphi = \Spc(q^*)$ is injective.

Consider the finite (Verdier) localization of $\Mod_{\Sp_{E}}(\underline{R}_E)$ away from the localizing ideal $\Loco{\underline{R/p}_E}$:
\[
\xymatrix{f^*\colon \Mod_{\Sp_{E}}(\underline{R}_E) \ar[r] & \Mod_{\Sp_{E}}(\underline{R[1/p]}_E).}
\]
The category of local objects is as stated, because localizing away from the cofiber of multiplication by $p$ has the effect of inverting $p$, which in turn can be performed on the level of the unit. By virtue of \cref{prop:monogenicborel} and the fact that the group cohomology ring $\pi_*R^{hE} \cong H^*(E;R)$ is $p^{\rank_p(E)}$-torsion in all non-zero degrees, we can identify the target of $f^*$ further as 
\[
\Mod_{\Sp_{E}}(\underline{R[1/p]}_E) \simeq \Mod_{\Sp}((R[1/p])^{hE}) \simeq \Mod_{\Sp}(R[1/p]).
\]
The induced injective map $\psi\colon \Spc(\Mod_{\Sp}^c(R[1/p])) \to \Spc(\Mod_{\Sp_{E}}^c(\underline{R}_E))$ on spectra exhibits the domain as the quasi-compact open subset complementing $\supp(\underline{R/p}_E)$. Combined with \eqref{eq:suppofRmodp} this shows that $\varphi$ and $\psi$ are jointly surjective, thereby establishing Property (a) above. Property (b) holds as well, using \cref{prop:elabeqstratificationcharp} and Neeman's stratification theorem for derived categories of Noetherian commutative rings, \cite[Thm.~2.8]{Neeman92a}.

After this preparation, we can now descend stratification along $f^*$ and $q^*$. To this end, first consider a local category $\Gamma_{\frak p}\Mod_{\Sp_{E}}(\underline{R}_E)$ for a prime tt-ideal $\frak p \in \im(\psi) \subseteq \Spc(\Mod_{\Sp_{E}}^c(\underline{R}_E))$. Since $\Mod_{\Sp}(R[1/p])$ is stratified, $\Gamma_{\frak p}\Mod_{\Sp_{E}}(\underline{R}_E)$ is minimal by Zariski descent \cref{prop:ttdescent}(1). If $\frak p \in \im(\varphi)$ instead, we deduce minimality of $\Gamma_{\frak p}\Mod_{\Sp_{E}}(\underline{R}_E)$ via \'etale descent along $f^*$, using \cref{prop:ttdescent}(2). 

Finally, it remains to observe that $\Spc(\Mod_{\Sp_{E}}^c(\underline{R}_E))$ is Noetherian and homeo-morphic to $\Spec^h(\pi_*R^{hE})$, thanks to Lau's theorem \cite{lau_spcdmstacks}.
\end{proof}

\section{Representation categories with regular coefficients}\label{sec:reptheory}

We are finally ready to deduce stratification for the categories $\Rep(G,R)$ of representations for regular coefficients $R$. In addition to \cref{thm:elabeqstratification}, the key inputs are a `generation by permutation modules' result to pass from equivariant modules over the Borel-complete ring spectrum $\underline{R}_G$ to representation theory, as well as finite \'etale descent from \cite{barthel_integral1}.

\subsection{Representation-theoretic interpretation}\label{ssec:derreptheory}

Write $\Perf(R) = \Mod^c(R)$ for the symmetric monoidal category of perfect $R$-modules. The \emph{derived $\infty$-category of $R$-linear $G$-representations} is defined as
\[
\Rep(G,R) = \Ind\Fun(BG,\Perf(R)),
\]
where $\Ind$ denotes ind-completion~\cite[\S5.3.5]{HTTLurie}. This category has the structure of a rigidly-compactly generated symmetric monoidal stable $\infty$-category. 

\begin{Rem}\label{rem:incarnations}
We offer some comments on the derived category of representations considered in this paper. Since it is rigidly-compactly generated, $\Rep(G,R)$ is determined as a symmetric monoidal category by its full subcategory of compact objects $\rep(G,R) = \Fun(BG,\Perf(R))$. Its homotopy category $\mathrm{Ho}(\rep(G,R))$ can be identified with the following categories:
    \begin{enumerate}
        \item The bounded derived category of modules over $R[G]$ whose underlying complex of $R$-modules is perfect. 
        \item The category of perfect complexes over the Deligne--Mumford quotient stack $[\Spec(R)/G]$, when $R$ is a regular commutative ring equipped with trivial $G$-action.
    \end{enumerate}
Furthermore, $\Rep(G,R)$ admits a canonical localization functor to the unbounded derived category $D(R[G])$ whose kernel is by definition our version of the stable module category $\StMod(G,R)$ with coefficients in a general commutative ring $R$. In fact, this localization sequence promotes to a recollement familiar from Rickard's presentation of the stable module category with field coefficients. 

The stable module category $\StMod(G,R)$ also admits an algebraic model; we refer to \cite[Sec.~3.3]{barthel_integral1} for the details. In short, let $\Lat(G,R)$ be the category of $R$-linear $G$-representations with underlying projective $R$-module. The category of such representations inherits a Frobenius exact structure from the abelian category $\Mod(R[G])$ in which it is embedded. Likewise, the $R$-linear tensor product on $\Mod(R[G])$ restricts to a symmetric monoidal structure on $\Lat(G,R)$. The homotopy category of the associated Quillen exact model structure is then canonically equivalent to the homotopy category of the $\infty$-category $\StMod(G,R)$ as tt-categories.
\end{Rem}

The next result summarizes the discussion of Sections 3.1 and 3.2 in \cite{barthel_integral1}, specifically Theorem 3.7, Proposition 3.16, and Corollary 3.21 therein.

\begin{Prop}[\cite{barthel_integral1}]\label{prop:eqreptranslation}
Let $G$ be a finite group and $R$ a regular commutative ring. There is a commutative diagram of colimit preserving tt-functors
\begin{equation}\label{eq:equivtorep}
    \vcenter{
    \xymatrix{\Mod_{\Sp_G}(\underline{R}_G) \ar[r]^-{\sim} \ar[d]_{\theta^*} & \Rep(G,R) \ar[d]^{\theta^*} \\
    \Mod_{\Sp_G}(\underline{R}_G \otimes \widetilde{EG}) \ar[r]_-{\sim} & \StMod(G,R)}
    }
\end{equation}
in which the vertical maps are finite localizations away from $\Loco{\underline{R}_G\otimes G_+}$ and $\Loco{R[G]}$, respectively. 
\end{Prop}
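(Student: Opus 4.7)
The plan is to construct the square in two steps: first establish the top horizontal equivalence on compact objects by generation-by-permutation-modules, then obtain the entire square by taking compatible finite localizations on both sides.

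First, I would recall from \cite[Cor.~6.21]{MathewNaumannNoel17} that there is a fully faithful tt-functor
\[
\xymatrix{\Mod_{\Sp_G}^c(\underline{R}_G) \ar[r] & \Fun(BG,\cat D(R)^c) = \rep(G,R)}
\]
sending $\underline{R}_G \otimes G/H_+$ to the permutation representation $R[G/H]$. To upgrade this fully faithful embedding to an equivalence, I would invoke the generation-by-permutation-modules results of Rouquier, Mathew, and Balmer--Gallauer to show that the collection $\{R[G/H]\}_{H\le G}$ thickly generates $\rep(G,R)$; regularity of $R$ enters precisely here, since it is what ensures that compact objects of $\Rep(G,R)$ admit (finite) resolutions by permutation modules. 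Passing to ind-completions then produces the top horizontal equivalence $\Mod_{\Sp_G}(\underline{R}_G) \simeq \Rep(G,R)$, and symmetric monoidality follows since it holds on a set of compact generators.

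Next, the left vertical arrow is, by construction, the finite localization of $\Mod_{\Sp_G}(\underline{R}_G)$ away from $\Loco{\underline{R}_G \otimes G_+}$; smashing the isotropy separation cofiber sequence $EG_+ \to S^0 \to \widetilde{EG}$ with $\underline{R}_G$ identifies the local category with $\Mod_{\Sp_G}(\underline{R}_G \otimes \widetilde{EG})$. Under the top equivalence, $\underline{R}_G \otimes G_+$ corresponds to $R[G]$, so the induced finite localization on the right is the Verdier quotient of $\Rep(G,R)$ away from $\Loco{R[G]}$. This is exactly the definition of $\StMod(G,R)$ used in \cite{barthel_integral1} (which in turn recovers Rickard's stable module category in its recollement form). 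The bottom equivalence and the commutativity of the square then follow formally from the universal property of finite localizations, and colimit preservation of both $\theta^*$ maps is automatic.

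The main obstacle is the generation-by-permutation-modules step, i.e., showing that every compact object of $\Rep(G,R)$ lies in the thick subcategory generated by the $R[G/H]$. Over a field this is classical, but over a general regular base one has to combine Balmer--Gallauer's descent-theoretic analysis with the fact that regularity of $R$ makes perfect complexes over $R[G]$ accessible by resolutions with permutation-module entries. Everything else in the proposition is formal once this comparison is in place.
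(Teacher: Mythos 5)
Your proposal correctly identifies the two key ingredients---the fully faithful embedding from \cite[Cor.~6.21]{MathewNaumannNoel17} and its upgrade to an equivalence via generation by permutation modules over a regular base (Rouquier, Mathew, Balmer--Gallauer)---and then formally matches the induced finite localizations on both sides, which is precisely the route indicated by the paper's remarks and its citation of \cite{barthel_integral1}. The only point worth making explicit is that $\Loco{\underline{R}_G\otimes G_+}=\Loco{\underline{R}_G\otimes EG_+}$ (because $G_+\wedge EG_+\simeq G_+$ and $EG_+$ is built from free cells), which is what identifies the local category of the left vertical localization with $\Mod_{\Sp_G}(\underline{R}_G\otimes\widetilde{EG})$.
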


\begin{Rem}
The top equivalence in Diagram \eqref{eq:equivtorep} is a homotopical manifestation of the generation by permutation modules principle proven by Rouquier~\cite{Rouquier_letter}, Mathew~\cite[App.~A]{treumannmathew_reps}, and Balmer--Gallauer~\cite{balmergallauer_permmodulesres}. More generally, for coefficients in any commutative ring spectrum $S$, there is a colimit preserving tt-functor
\[
\xymatrix{\Mod_{\Sp_G}(\underline{S}_G) \ar[r] & \Rep(G,S)}
\]
which is fully faithful. The question of when this functor is also essentially surjective will be studied further in joint work in progress with Gallauer. 
\end{Rem}

This proposition allows us to assemble the desired stratification results from the theorems proven in \cref{sec:eqperm} coupled with Lau's computation of the spectrum of $\rep(G,R) = \Rep(G,R)^c$.

\begin{Thm}\label{thm:repstratification}
Suppose $G$ is a finite group and $R$ is a regular commutative ring, then $\Rep(G,R)$ is stratified over $\Spc(\rep(G,R)) \cong \Spec^h(H^*(G;R))$.
\end{Thm}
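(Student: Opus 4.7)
My plan is to combine the equivalence from Proposition~\ref{prop:eqreptranslation} with the elementary abelian stratification of Theorem~\ref{thm:elabeqstratification}, passing from elementary abelian subgroups to the full group $G$ by finite \'etale descent. The starting point is the top horizontal equivalence in \eqref{eq:equivtorep}, which immediately reduces the stratification of $\Rep(G,R)$ to that of $\Mod_{\Sp_G}(\underline{R}_G)$. The identification of the Balmer spectrum with $\Spec^h(H^*(G;R))$ is then a direct application of Lau's theorem \cite{lau_spcdmstacks}.

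For the stratification itself, I would choose a finite family $\{E_i\}_{i \in I}$ of elementary abelian $p$-subgroups of $G$, with $p$ ranging over the primes dividing $|G|$ (for instance one representative from each conjugacy class of elementary abelian $p$-subgroups), and consider the commutative algebra
\[
A \;=\; \prod_{i \in I} \underline{R}_G \otimes (G/E_i)_+ \;\in\; \CAlg\bigl(\Mod_{\Sp_G}(\underline{R}_G)\bigr).
\]
Each factor $\underline{R}_G \otimes (G/E_i)_+$ is compact, being built from a finite pointed $G$-set, and separable of finite degree (since $(G/E_i)_+$ is so in $\Sp_G$ and separability is preserved by the symmetric monoidal base-change along $\Sphere_G \to \underline{R}_G$); these properties are inherited by the finite product $A$. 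By base-change along induction--restriction, there is a symmetric monoidal identification
\[
\Mod_{\Mod_{\Sp_G}(\underline{R}_G)}(A) \;\simeq\; \prod_{i \in I} \Mod_{\Sp_{E_i}}(\underline{R}_{E_i}),
\]
and each factor is stratified by Theorem~\ref{thm:elabeqstratification}; stratification is preserved by finite products, so the entire target is stratified.

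Granting that $\{E_i\}$ can be arranged to make $A$ faithful over $\underline{R}_G$, Proposition~\ref{prop:ttdescent}(3) then shows that $\Gamma_{\frak p}\Mod_{\Sp_G}(\underline{R}_G)$ is minimal for every $\frak p \in \Spc(\Mod_{\Sp_G}^c(\underline{R}_G))$, which is precisely the minimality criterion for stratification recorded in \cref{ssec:recstratification}. The principal obstacle is verifying the faithfulness of $A$: on the tt-categorical level, this is the joint conservativity of the restriction functors $\{\res_{E_i}^G\}$ on $\Mod_{\Sp_G}(\underline{R}_G)$, which is the equivariant homotopy-theoretic incarnation of Quillen's detection of cohomology by elementary abelian subgroups. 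The introduction signals that this reduction to elementary abelian subgroups was carried out in \cite{barthel_integral1}, so once it is invoked the remaining hypotheses of \cref{prop:ttdescent}(3) are supplied by the separability and compactness observations above, and the theorem follows.
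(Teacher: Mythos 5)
Your proposal is correct and takes essentially the same route as the paper: the elementary abelian base case from \cref{thm:elabeqstratification}, the equivalence $\Mod_{\Sp_G}(\underline{R}_G)\simeq\Rep(G,R)$ of \cref{prop:eqreptranslation}, and finite \'etale descent via \cref{prop:ttdescent}(3) to a product over elementary abelian $p$-subgroups, with the spectrum identified by Lau's theorem. The paper delegates the descent step to \cite[Thm.~4.18(1)]{barthel_integral1}; the separable algebra $A=\prod_i\underline{R}_G\otimes(G/E_i)_+$ you construct, together with the appeal to Quillen's detection for its faithfulness, is exactly what that cited result supplies.
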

\begin{proof}
If $G = E$ is an elementary abelian $p$-group for some prime $p$, the category $\Mod_{\Sp_E}(\underline{R}_E)$ is stratified by \cref{thm:elabeqstratification}. Combined with \cref{prop:eqreptranslation}, this implies the result for $\Rep(E,R)$. The case of a general finite group then follows from this via finite \'etale descent (\cref{prop:ttdescent}(3)), as shown in \cite[Thm.~4.18(1)]{barthel_integral1}. Finally, the identification of the spectrum $\Spc(\rep(G,R))$ is the content of Lau's theorem \cite{lau_spcdmstacks}.
\end{proof}

\subsection{Classification and further consequences}\label{ssec:applications}

This short subsection collects consequences of our main theorem for $\Rep(G,R)$ and related categories.

\begin{Thm}\label{thm:repconsequences}
Let $G$ be a finite group and let $R$ be a regular commutative ring. The universal support function composed with Balmer's comparison map induces a bijection
\[
\Supp\colon\begin{Bmatrix}
\text{Localizing tensor ideals} \\
\text{of } \cat \Rep(G,R)
\end{Bmatrix} 
\xymatrix@C=2pc{ \ar[r]^-{\sim} &}
\begin{Bmatrix}
\text{Subsets of}  \\
\Spec^h(H^*(G;R))
\end{Bmatrix}.
\]
Moreover, the telecope conjecture holds in $\Rep(G,R)$, so that we obtain bijections
\[
\begin{Bmatrix}
\text{Smashing ideals} \\
\text{of } \cat \Rep(G,R)
\end{Bmatrix} 
\simeq
\begin{Bmatrix}
\text{Thick tensor ideals} \\
\text{of } \cat \rep(G,R)
\end{Bmatrix} 
\simeq
\begin{Bmatrix}
\text{Specialization closed}  \\
\text{subsets of } \Spec^h(H^*(G;R))
\end{Bmatrix},
\]
and support satisfies the tensor product formula for any $M,N \in \Rep(G,R)$:
\[
\Supp(M \otimes N) = \Supp(M) \cap \Supp(N).
\]
\end{Thm}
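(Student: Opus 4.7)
The plan is to deduce all three assertions from the stratification of $\Rep(G,R)$ established in \cref{thm:repstratification}, together with the general machinery of \cite{bhs1}. Throughout, we use that the spectrum $\Spc(\rep(G,R)) \cong \Spec^h(H^*(G;R))$ is Noetherian, so the hypotheses of \cref{conv:stratification} are satisfied and the Balmer--Favi support function \eqref{eq:BFsupport} is defined on all of $\Rep(G,R)$.

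First, I would unwind \cref{def:stratification} in the present situation: stratification of $\Rep(G,R)$ is exactly the assertion that the Balmer--Favi support induces a bijection between localizing tensor ideals of $\Rep(G,R)$ and subsets of $\Spc(\rep(G,R))$. Transporting the target bijectively along Lau's homeomorphism $\Spc(\rep(G,R)) \cong \Spec^h(H^*(G;R))$ and observing that this homeomorphism is implemented by Balmer's comparison map $\rho^{\bullet}$ of \eqref{eq:Balmercomparisonmap} (cf.\ the second part of \cref{thm:ttstratificationuniversality}), yields the first bijection of the theorem.

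Second, for the telescope conjecture I would invoke the general principle that, for rigidly-compactly generated tt-categories with Noetherian spectrum, stratification in the sense of \cref{def:stratification} implies the telescope conjecture; this is part of the package of consequences developed in \cite{bhs1}. Combining this with Balmer's classification of thick tensor ideals of $\rep(G,R) = \Rep(G,R)^c$ via Thomason (equivalently, specialization closed) subsets of the Noetherian space $\Spc(\rep(G,R))$, and transporting along the homeomorphism with $\Spec^h(H^*(G;R))$, produces the stated chain of bijections between smashing ideals, thick tensor ideals, and specialization closed subsets.

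Third, the tensor product formula $\Supp(M \otimes N) = \Supp(M) \cap \Supp(N)$ for $M, N \in \Rep(G,R)$ is another formal consequence of stratification over a Noetherian spectrum: it holds for the Balmer--Favi notion of support in any stratified tt-category, as established in \cite{bhs1}. The only mild subtlety—and arguably the main thing to be careful about—is to ensure that in all three statements the support function referred to is genuinely the Balmer--Favi one, and that its identification with the cohomological support via Balmer's comparison map is the one coming from \cref{thm:ttstratificationuniversality}, so that the bijections are indexed by subsets of $\Spec^h(H^*(G;R))$ as stated. Once this bookkeeping is in place, the three statements follow directly.
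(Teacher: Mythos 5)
Your proof is correct and follows essentially the same route as the paper: the first bijection is the definition of stratification combined with the homeomorphism $\Spc(\rep(G,R)) \cong \Spec^h(H^*(G;R))$ from \cref{thm:repstratification}, and the telescope conjecture and tensor product formula are cited from \cite{bhs1} (specifically Theorems 8.1 and 9.11 there). Your additional care about identifying the support function with cohomological support via Balmer's comparison map is a helpful bit of bookkeeping that the paper leaves implicit.
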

\begin{proof}
In light of \cref{thm:repstratification}, the first statement holds by definition of stratification, while the rest follow formally from \cref{thm:repstratification} by virtue of \cite[Thm.~8.1]{bhs1} and \cite[Thm.~9.11]{bhs1}.
\end{proof}

We obtain a similar classification result for the integral stable module category of a finite group with regular coefficients. 

\begin{Thm}\label{thm:stmodstratification}
Let $R$ be a regular ring and $G$ a finite group. The stable module category $\StMod(G,R)$ is stratified over
\[
\Spc(\StMod^c(G,R)) \cong \Proj(H^*(G;R)).
\]
In particular, the telescope conjecture and the tensor product formula for cohomological support hold in $\StMod(G,R)$.  
\end{Thm}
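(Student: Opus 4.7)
The strategy is to transport \cref{thm:repstratification} through the finite localization encoded in \cref{prop:eqreptranslation}. The bottom row of diagram~\eqref{eq:equivtorep} realizes the canonical functor $\theta^*\colon \Rep(G,R) \to \StMod(G,R)$ as the finite (Verdier) localization of $\Rep(G,R)$ away from the localizing ideal $\Loco{R[G]}$ generated by the regular representation.

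The first task is to identify the Thomason subset being collapsed. Via the Shapiro-type isomorphism coming from the induction--restriction adjunction along $1 \hookrightarrow G$, the graded endomorphism ring $\End_{\Rep(G,R)}^*(R[G]) \cong H^*(1;R) \cong R$ is concentrated in degree zero. Consequently the $H^*(G;R)$-module structure on this endomorphism ring factors through the quotient by the irrelevant ideal $H^{>0}(G;R)$, and so under the identification $\Spc(\rep(G,R)) \cong \Spec^h(H^*(G;R))$ from \cref{thm:repstratification} the support $\supp(R[G])$ is precisely the closed subset $\Spec(R) \hookrightarrow \Spec^h(H^*(G;R))$ cut out by $H^{>0}(G;R)$. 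Its open complement is, by definition, $\Proj(H^*(G;R))$.

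Next, I would invoke ascent of stratification along finite localizations. Since a finite localization corresponds to restriction to the quasi-compact open complement of a Thomason subset, the local categories $\Gamma_{\frak p}\StMod(G,R) \simeq \Gamma_{\frak p}\Rep(G,R)$ agree for every $\frak p$ in this open subset, so minimality transfers directly from \cref{thm:repstratification}. In the language of \cite[Cor.~5.3]{bhs1}, this is a special case of Zariski (co)descent of stratification. Combined with Lau's identification of the Balmer spectrum of compact stable modules, this yields the desired stratification of $\StMod(G,R)$ over $\Proj(H^*(G;R))$.

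The remaining assertions, namely the telescope conjecture and the tensor product formula for cohomological support in $\StMod(G,R)$, then follow formally from stratification via \cite[Thm.~8.1, Thm.~9.11]{bhs1}, exactly as in the proof of \cref{thm:repconsequences}. The only nontrivial input beyond the main theorem is the support computation $\supp(R[G]) = \Spec(R)$; everything else is a routine application of the stratification formalism already assembled in the paper, and accordingly I do not expect any serious obstacle.
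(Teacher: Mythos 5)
Your proposal is correct and follows essentially the same route as the paper: identify $\StMod(G,R)$ as the finite localization of $\Rep(G,R)$ away from $\Loco{R[G]}$ via \cref{prop:eqreptranslation}, pass stratification along the finite localization, and read off the spectrum as $\Proj(H^*(G;R))$. The only cosmetic differences are that the correct reference for ascent of stratification along a finite localization is \cite[Cor.~5.5]{bhs1} (your Cor.~5.3 is the converse, descent, direction), and that the paper simply quotes \cite[Cor.~3.32]{barthel_integral1} for $\Spc(\StMod^c(G,R))\cong\Proj(H^*(G;R))$ rather than re-deriving it from the support computation for $R[G]$ as you do.
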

\begin{proof}
Since stratification ascends along finite localizations (\cite[Cor.~5.5]{bhs1}), we see that $\StMod(G,R)$ is stratified by \cref{prop:eqreptranslation} and \cref{thm:repstratification}. Moreover, \cite[Cor.~3.32]{barthel_integral1} provides a natural homeomorphism
\[
\Spc(\StMod^c(G,R)) \cong \Proj(H^*(G;R)),
\]
thus finishing the proof.
\end{proof}

Finally, we can give the desired classification for the category $\Lat(G,R)$ of ordinary $R$-linear $G$-representations with underlying projective $R$-module, whose construction was recalled in \cref{rem:incarnations}. 

\begin{Cor}\label{cor:classicalrepconsequences}
Let $G$ be a finite group and $R$ a regular commutative ring. Cohomological support induces  bijections 
\[
\begin{Bmatrix}
\text{Non-zero localizing tensor } \\
\text{ideals of } \cat \Lat(G,R)
\end{Bmatrix} 
\xymatrix@C=2pc{ \ar[r]^-{\sim} &}
\begin{Bmatrix}
\text{Subsets of}  \\
\Proj(H^*(G;R))
\end{Bmatrix}
\]
and
\[
\begin{Bmatrix}
\text{Non-zero thick} \\
\text{ideals of } \cat \lat(G,R)
\end{Bmatrix} 
\xymatrix@C=2pc{ \ar[r]^-{\sim} &}
\begin{Bmatrix}
\text{Specialization closed}  \\
\text{subsets of } \Proj(H^*(G;R))
\end{Bmatrix}.
\]
\end{Cor}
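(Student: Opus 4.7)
The plan is to derive this corollary from \cref{thm:stmodstratification}, the stratification of $\StMod(G,R)$, via the algebraic model recalled in \cref{rem:incarnations}. The Frobenius exact structure on $\Lat(G,R)$ has a homotopy category that is canonically equivalent, as a tt-category, to $\StMod(G,R)$, with every projective $R[G]$-module sent to zero. Cohomological support on $\Lat(G,R)$ is therefore defined via this passage to the stable module category.

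The key step is to establish a bijection between non-zero localizing tensor ideals of $\Lat(G,R)$ and all localizing tensor ideals of $\StMod(G,R)$. In one direction, the image of a localizing tensor ideal of $\Lat(G,R)$ under the quotient $\Lat(G,R) \to \StMod(G,R)$ generates a localizing tensor ideal in the stable category. In the other, the preimage of such an ideal, together with all projective $R[G]$-modules, forms a localizing tensor ideal of $\Lat(G,R)$. The ``non-zero'' restriction accommodates the fact that the zero ideal in $\StMod(G,R)$ lifts to the ideal of projective representations, which is itself non-trivial in $\Lat(G,R)$. Composing this correspondence with the stratification bijection of \cref{thm:stmodstratification}, which identifies localizing tensor ideals of $\StMod(G,R)$ with subsets of $\Spc(\StMod^c(G,R)) \cong \Proj(H^*(G;R))$ via cohomological support, produces the first bijection of the corollary.

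The second bijection for thick tensor ideals of $\lat(G,R)$ follows by the same analysis applied to compact objects, combined with the telescope conjecture in $\StMod(G,R)$---a consequence of \cref{thm:repconsequences} transported via the finite localization in \cref{prop:eqreptranslation}---which matches thick tensor ideals of $\stmod(G,R)$ with specialization closed subsets of $\Proj(H^*(G;R))$. The main obstacle I expect is precisely the first step: carefully justifying the correspondence between localizing (respectively thick) tensor ideals in the exact category $\Lat(G,R)$ and in its triangulated quotient $\StMod(G,R)$, since $\Lat(G,R)$ is not itself triangulated and the notion of tensor ideal has to be interpreted through its exact structure. Once this bookkeeping is sorted out, the rest is a formal consequence of the stratification of $\StMod(G,R)$.
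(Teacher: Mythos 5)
Your approach matches the paper's: both deduce the corollary from \cref{thm:stmodstratification} by passing from the Frobenius exact category $\Lat(G,R)$ to $\StMod(G,R)$, and the correspondence between non-zero localizing (resp.\ thick) tensor ideals of $\Lat(G,R)$ and all localizing (resp.\ thick) ideals of $\StMod(G,R)$---with the ideal of projectives playing the role of the zero ideal---is exactly what the paper outsources to \cite[Lem.~6.23 and Rem.~6.25]{barthel_integral1}. One small correction: for the second bijection you invoke the telescope conjecture, but this is not what matches thick tensor ideals of $\stmod(G,R)$ with specialization closed subsets of $\Proj(H^*(G;R))$. That identification is Balmer's classification of thick tensor ideals, which applies directly once $\Spc(\stmod(G,R)) \cong \Proj(H^*(G;R))$ is known (from \cref{thm:stmodstratification}); the telescope conjecture instead relates smashing ideals of $\StMod(G,R)$ to thick ideals of compacts and is not needed here. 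Also note the telescope conjecture for $\StMod(G,R)$ is already asserted in \cref{thm:stmodstratification}, so the detour through \cref{thm:repconsequences} and \cref{prop:eqreptranslation} is redundant.
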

\begin{proof}
In light of \cref{thm:stmodstratification}, the desired bijections are furnished by \cite[Lem.~6.23 and Rem.~6.25]{barthel_integral1}.
\end{proof}

\bibliographystyle{alpha}\bibliography{TG-articles}

\end{document}